\newtheorem{question}{Question}
\newtheorem{thm}{Theorem}[section]
\newtheorem{cor}{Corollary}[section]
\newtheorem{lem}[thm]{Lemma}
\newtheorem*{mythm}{Theorem}
\newtheorem*{ex}{Example}
\newcommand{\BZ}{\mathbb Z}
\newcommand{\BR}{\mathbb R}
\DeclareMathOperator{\tr}{tr}
\DeclareMathOperator{\Stab}{Stab}
\DeclareMathOperator{\PSL}{PSL_2}
\DeclareMathOperator{\length}{length}
\DeclareMathOperator{\Hom}{Hom}
\def\T{\mathcal{T}}
\begin{document}

\title{The Sunada construction and the simple length spectrum} 

\author{Rasimate Maungchang}

\maketitle

\begin{abstract}
We show that certain families of iso-length spectral hyperbolic surfaces obtained via the Sunada construction are not generally simple iso-length spectral.
\end{abstract}
%%%%%%%%%%%%%%%%%%%%%%%%%%%%%%%%%%%%%%%%%%%%%%%%%%%%%%%%%%%%%%%%%%%%%%%%%%%%%%%%%%%%%%%%%%%%%%%
%%%%%%%%%%%%%%%%%%%%%%%%%%%%%%%%%%%%%%%%%%%%%%%%%%%%%%%%%%%%%%%%%%%%%%%%%%%%%%%%%%%%%%%%%%%%%%%
%%%%%%%%%%%%%%%%%%%%%%%%%%%%%%%%%%%%%%%%%%%%%%%%%%%%%%%%%%%%%%%%%%%%%%%%%%%%%%%%%%%%%%%%%%%%%%%
%%%%%%%%%%%%%%%%%%%%%%%%%%%%%%%%%%%%%%%%%%%%%%%%%%%%%%%%%%%%%%%%%%%%%%%%%%%%%%%%%%%%%%%%%%%%%%%
%%%%%%%%%%%%%%%%%%%%%%%%%%%%%%%%%%%%%%%%%%%%%%%%%%%%%%%%%%%%%%%%%%%%%%%%%%%%%%%%%%%%%%%%%%%%%%%
\section{Introduction}
\label{sec:Introduction}
%%%%%%%%%%%%%%%%%%%%%%%%%%%%%%%%%%%%%%%%%%%%%%%%%%%%%%%%%%%%%%%%%%%%%%%%%%%%%%%%%%%%%%%%%%%%%%%
%%%%%%%%%%%%%%%%%%%%%%%%%%%%%%%%%%%%%%%%%%%%%%%%%%%%%%%%%%%%%%%%%%%%%%%%%%%%%%%%%%%%%%%%%%%%%%%
%%%%%%%%%%%%%%%%%%%%%%%%%%%%%%%%%%%%%%%%%%%%%%%%%%%%%%%%%%%%%%%%%%%%%%%%%%%%%%%%%%%%%%%%%%%%%%%
%%%%%%%%%%%%%%%%%%%%%%%%%%%%%%%%%%%%%%%%%%%%%%%%%%%%%%%%%%%%%%%%%%%%%%%%%%%%%%%%%%%%%%%%%%%%%%%
%%%%%%%%%%%%%%%%%%%%%%%%%%%%%%%%%%%%%%%%%%%%%%%%%%%%%%%%%%%%%%%%%%%%%%%%%%%%%%%%%%%%%%%%%%%%%%%

Let $M$ be a compact Riemannian manifold. The \textbf{length spectrum} $L(M)$ of $M$ is the set of all lengths of closed geodesics on $M$ counted with multiplicities. Two manifolds $M_1$ and $M_2$ are said to be \textbf{iso-length spectral} if $L(M_1)=L(M_2)$. 

In \cite{Sunada}, Sunada provided a method to construct iso-length spectral manifolds that are frequently not isometric (see also \cite[Ch.11-13]{BuserBook}). This requires a notion from group theory.

Let $G$ be a finite group. Two subgroups $H$ and $K$ of $G$ are said to be \textbf{almost conjugate} if, for any $g\in G$, 
\[
\left|H\cap(g)\right|=\left|K\cap(g)\right|,
\]
where $(g)$ denotes the conjugacy class of $g$ in $G$.

\begin{mythm}[\textbf{Sunada}]
\label{T:Sunada construction}
Let $M_0$ be a closed Riemannian manifold, $G$ a finite group, and $H$ and $K$ almost conjugate subgroups of $G$. If there is a surjective homomorphism from $\pi_1(M_0)$ onto $G$, then the finite covering spaces $M_H$ and $M_K$ of $M_0$ corresponding to the subgroups $H$ and $K$, respectively, are iso-length spectral.
\end{mythm}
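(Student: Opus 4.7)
The plan is to realize $M_H$ and $M_K$ as quotients of a common regular cover of $M_0$ and translate the counting of closed geodesics into combinatorial data on the coset spaces $G/H$ and $G/K$. To set this up, let $N = \ker\phi \trianglelefteq \pi_1(M_0)$ and let $\tilde M \to M_0$ be the regular covering corresponding to $N$; its deck group is $G$, and $M_H = H\backslash \tilde M$, $M_K = K\backslash \tilde M$ are the intermediate covers of interest, with $\pi_1(M_H) = \phi^{-1}(H)$ and analogously for $K$.

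Next I would reduce the iso-length spectral statement to a counting problem on $G$-sets. For each primitive closed geodesic $\gamma_0$ on $M_0$ of length $\ell_0$, fix a generator $\alpha \in \pi_1(M_0)$ of its free homotopy class and set $\sigma = \phi(\alpha) \in G$. A standard covering-space argument establishes a bijection between the cycles of $\sigma$ acting on $G/H$ by left multiplication and the primitive closed geodesics on $M_H$ that cover $\gamma_0$, under which a cycle of length $m$ corresponds to a primitive closed geodesic of length $m\ell_0$; the same holds verbatim for $M_K$. Summing over primitive $\gamma_0 \subset M_0$ (and then over iterates) expresses the full length spectrum of $M_H$ (resp.\ $M_K$) in terms of the cycle structures of elements of $G$ on $G/H$ (resp.\ $G/K$).

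The final step invokes the almost conjugacy hypothesis. By M\"obius inversion, the cycle structure of $\sigma$ acting on $G/H$ is determined by the sequence $F_H(\sigma^n) := |\{xH \in G/H : \sigma^n xH = xH\}|$ for $n \ge 1$. A direct count (equivalently, evaluating the character of the permutation representation $\mathbb{C}[G/H]$) gives
\[
F_H(g) \;=\; \frac{|C_G(g)|\,|H \cap (g)|}{|H|},
\]
and summing the almost conjugacy identity over the conjugacy classes of $G$ yields $|H| = |K|$. Hence $F_H(g) = F_K(g)$ for every $g \in G$, so $\sigma$ has the same cycle structure on $G/H$ as on $G/K$, each primitive $\gamma_0$ on $M_0$ contributes identically to $L(M_H)$ and $L(M_K)$, and therefore $L(M_H) = L(M_K)$.

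The main obstacle I anticipate is the bijection asserted in the second paragraph: making precise the length-preserving correspondence between primitive closed geodesics on $M_H$ covering $\gamma_0$ and cycles of $\sigma$ on $G/H$. One identifies such a closed geodesic first with an $H$-equivalence class of lifts of $\gamma_0$ to $\tilde M$, then with an orbit of $\langle \sigma \rangle$ on $G/H$, and verifies that the translation length along each lift equals the cycle length times $\ell_0$. Once this dictionary is in place, the character-theoretic reduction above is essentially routine.
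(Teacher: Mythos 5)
This statement is quoted from Sunada's paper and is not proved in the text you were given (the author simply cites \cite{Sunada} and \cite[Ch.11-13]{BuserBook}), so there is no in-paper argument to compare against. Your proposal is, however, essentially the standard combinatorial proof of Sunada's theorem as presented in Buser's book: pass to the common $G$-cover $\tilde M$, match primitive closed geodesics of $M_H$ lying over a primitive closed geodesic $\gamma_0$ of $M_0$ with the cycles of $\sigma=\phi(\alpha)$ on $G/H$, recover the cycle structure from the fixed-point counts $F_H(\sigma^n)$ by M\"obius inversion, and observe that $F_H(g)=|C_G(g)|\,|H\cap(g)|/|H|$ together with $|H|=|K|$ (obtained by summing the almost-conjugacy identity over conjugacy classes) forces $F_H=F_K$. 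This is correct; the only points worth making explicit are that $\sigma$ is well defined only up to conjugacy in $G$ (harmless, since the cycle type of the action on $G/H$ is a conjugacy invariant) and that the dictionary between components of $\pi_H^{-1}(\gamma_0)$ and $\langle\sigma\rangle$-orbits on $G/H$, with degree equal to orbit length, is exactly the covering-space bookkeeping you flag as the main obstacle --- it is the same bookkeeping the author uses implicitly in Lemma 3.1 when decomposing $\pi_H^{-1}(\alpha)$ into $H$-orbits of lifts, so your reduction is consistent with how the paper actually uses the theorem.
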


When $H$ and $K$ are not conjugate in $G$, the manifolds $M_H$ and $M_K$ can often be shown to be nonisometric. For example, when $M_0$ is a surface, a generic hyperbolic metric on $M_0$ will produce nonisometric $M_H$ and $M_K$; see \cite[Ch.12.7]{BuserBook}.

For surfaces, the simple closed geodesics often carry more topological information. Accordingly,  the \textbf{simple length spectrum} $L^s(M)$ of $M$ is defined to be the set of all lengths of simple closed geodesics on $M$ counted with multiplicities; see \cite{McShane}. Two manifolds $M_1$ and $M_2$ are said to be \textbf{simple iso-length spectral} if $L^s(M_1)=L^s(M_2)$. 

\begin{question}
\label{Q:Question1}
Are there nonisometric simple iso-length spectral hyperbolic surfaces?
\end{question}

In \cite{McShane}, McShane and Parlier give example of pairs of 4-holed spheres with geodesic boundary which have the same \textit{interior simple lengh spectrum} (one ignores the boundary lengths). They do in fact have different boundary lengths, and so they have different simple length spectrum.

One can ask if Sunada's construction provides a positive resolution to Question~\ref{Q:Question1}.

\begin{question}
\label{Q:question2}
Does Sunada's construction, for a given homomorphism \\$\rho:\pi_1(M_0) \to G$, generically give simple iso-length spectral surfaces?
\end{question}

To answer Question~\ref{Q:question2}, we choose one of the examples of almost conjugate subgroups Sunada provided in his paper \cite{Sunada}.

\begin{ex}
$G=(\BZ/8\BZ)^{\times} \ltimes \BZ/8\BZ$ with usual action of $(\BZ/8\BZ)^{\times}$ on $\BZ/8\BZ$.

$H=\left\{(1,0),(3,0),(5,0),(7,0)\right\}$ and
$K=\left\{(1,0),(3,4),(5,4),(7,0)\right\}$ are almost conjugate but not conjugate.
\end{ex}

Our main theorem is the following.

\begin{thm}
\label{T:main theorem}
Let $M_0$ be a closed oriented surface of genus 2, $G$, $H$, and $K$ the groups provided in the example above.

There is a surjective homomorphism $\rho:\pi_{1}(M_0) \to G$ such that, for almost every $[m]\in \T(M_0)$, the corresponding iso-length spectral surfaces $M_H$ and $M_K$ are not simple iso-length spectral.
\end{thm}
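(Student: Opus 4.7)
The plan divides into three parts. First, construct a surjective homomorphism $\rho:\pi_1(M_0)\to G$: since $G$ has order $32$ and is two-generated, and the genus-$2$ surface group has four generators $a_1,b_1,a_2,b_2$ subject only to $[a_1,b_1][a_2,b_2]=1$, I can send the first two generators to a two-element generating set of $G$ and then choose the remaining two so that the commutator relation is satisfied. Sunada's theorem then guarantees $L(M_H)=L(M_K)$ for every $[m]\in\T(M_0)$, and moreover for every primitive class $[\gamma]$ in $\pi_1(M_0)$, the multiset of component lengths of $p_H^{-1}(\gamma)$ equals that of $p_K^{-1}(\gamma)$, since the cycle type of $\rho(\gamma)$ acting on $G/H$ and $G/K$ is the same (this follows from the character equality underlying almost conjugacy).

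The crux is to find a closed geodesic $\gamma\subset M_0$ whose lift-components in $M_H$ and $M_K$ have different numbers of \emph{simple} representatives at some length $k\,\ell(\gamma)$. Observe first that $\gamma$ must itself be non-simple: if $\gamma$ is embedded then $p_H^{-1}(\gamma)$ and $p_K^{-1}(\gamma)$ are embedded $1$-submanifolds of their respective covers (since covering maps are local diffeomorphisms), so every component is automatically simple and no discrepancy can arise. For non-simple $\gamma$, however, each self-intersection of $\gamma$ lifts to a crossing in the preimage which either lies inside a single component (contributing a self-intersection) or joins two different components; the allocation is determined by the combinatorics of the $G$-action on $G/H$ versus $G/K$. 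Since $H$ and $K$ are almost conjugate but not conjugate, these two $G$-sets are non-isomorphic despite having identical characters, so the simple-component counts can differ. The plan is to choose $\rho$ and $\gamma$ explicitly and verify the discrepancy by a direct combinatorial computation: lifting each word representing a self-intersection of $\gamma$ via $\rho$, and checking whether the two crossing arcs belong to the same orbit of $\langle\rho(\gamma)\rangle$ on $G/H$ versus $G/K$. I expect this to be the principal obstacle, as it requires delicate explicit work with the specific almost-conjugate pair from the given example.

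Finally, the genericity argument. For any primitive class $\alpha\in\pi_1(M_0)$, the length $\ell_m(\alpha)$ is a real-analytic function on $\T(M_0)$, and distinct primitive classes give distinct length functions (a standard fact for closed hyperbolic surfaces), so on the connected real-analytic manifold $\T(M_0)$ any relation $k\,\ell_\cdot(\alpha)=k'\,\ell_\cdot(\alpha')$ holds either identically (forcing $(k,\alpha)=(k',\alpha')$ up to inversion) or on a measure-zero subset. Consequently, for almost every $[m]$, the multiplicity of the length $k\,\ell_m(\gamma)$ in $L^s(M_H)$ equals the number of size-$k$ orbits of $\rho(\gamma)$ on $G/H$ yielding simple lift-components (and similarly for $M_K$), with no accidental contributions from unrelated classes. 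Since these counts differ by the middle step, $L^s(M_H)\neq L^s(M_K)$ for almost every $[m]\in\T(M_0)$, as desired.
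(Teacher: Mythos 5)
Your overall strategy---an explicit $\rho$, a non-simple curve whose degree-one lift components are simple in one cover and non-simple in the other, and an analyticity/genericity argument to isolate the relevant length---is the same as the paper's. But the third step contains a genuine gap: you assert that ``distinct primitive classes give distinct length functions (a standard fact for closed hyperbolic surfaces),'' and this is false. Already the hyperelliptic involution $\tau$ of a genus~$2$ surface is isotopic to an isometry for every hyperbolic metric, so $\alpha$ and $\tau(\alpha)$ are (in general) distinct free homotopy classes with \emph{identically equal} length functions on all of $\T(M_0)$. Worse, by work of Horowitz and Randol there exist pairs of classes, not related by any mapping class, whose length functions agree identically; the paper explicitly flags this as the main subtlety, since such a partner of $\alpha$ could lift to a \emph{simple} geodesic in $M_H$ of the critical length and destroy the count. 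Consequently your claim that an identical relation $k\,\ell_\cdot(\alpha)=k'\,\ell_\cdot(\alpha')$ forces $(k,\alpha)=(k',\alpha')$ up to inversion is precisely the statement that requires proof, and it is where essentially all of the technical work lies.

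The paper closes this gap in Lemma~\ref{L:trace check} by a lengthy explicit argument: intersection numbers with test curves (via Theorem~\ref{T:geometric_intersection}) force $k=1$ and confine any competitor $\gamma$ to the complement of a simple closed curve; the homology constraint (Theorem~\ref{T:homology}) and an edge-counting argument on a spine reduce the competitors to a finite list of $4320$ words; and a trace computation at one explicit representation eliminates all of them except $\alpha$ and $\tau(\alpha)$. Separately, Lemma~\ref{L:tau} shows that $\tau$ lifts to $M_H$ and $M_K$, so the lifts of $\tau(\alpha)$ are non-simple in $M_H$ and simple in $M_K$ just as for $\alpha$. Your proposal needs both of these ingredients---some finite, checkable criterion ruling out accidental identical length coincidences at length $\ell(\alpha)$ and at its rational multiples, and an analysis of the unavoidable coincidence coming from $\tau$---before the genericity argument can be run.
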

In fact, we prove a little bit more. We define the \textbf{length set} and the \textbf{simple length set} of a manifold $M$ to be the set of all lengths of closed geodesics on $M$ without multiplicities and the set of all lengths of simple closed geodesics on $M$ without multiplicities, respectively. Then from the proof of Theorem~\ref{T:main theorem} we have the following corollary.

\begin{cor}
\label{C:cor}
The surfaces $M_H$ and $M_K$ in Theorem~\ref{T:main theorem} have the same length set but they do not have the same simple length set.
\end{cor}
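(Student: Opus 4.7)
My plan is to derive both halves of Corollary~\ref{C:cor} directly from what the proof of Theorem~\ref{T:main theorem} supplies. The length-set equality is immediate: by Theorem~\ref{T:Sunada construction} the multisets $L(M_H)$ and $L(M_K)$ coincide, so a fortiori the underlying length sets (forgetting multiplicities) coincide. No further argument is needed on this side.

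The real content is the inequality of the simple length sets, and I would extract from the proof of the main theorem a specific length $\ell$ witnessing it. I expect that proof to exhibit a simple closed geodesic $\tilde\alpha$ on $M_H$ of some length $\ell$, paired via the Sunada cycle-structure correspondence with closed geodesics on $M_K$ of the same length $\ell$ that are all non-simple for topological reasons: the cycle in the permutation action of $\rho([\alpha])$ on $G/H$ that produces $\tilde\alpha$ is matched with a cycle on $G/K$ whose corresponding lift of the underlying curve on $M_0$ has unavoidable self-intersections. Given such a witness, the corollary reduces to ruling out the possibility that some \emph{unrelated} primitive simple conjugacy class on $M_K$ accidentally has length $\ell$.

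This final step is the same in flavor as the genericity step already used in the main theorem. Hyperbolic length functions on $\T(M_0)$ are real-analytic, so the locus where two distinct primitive conjugacy classes in $\pi_1(M_K)$ share a common length is a countable union of proper analytic subvarieties of $\T(M_0)$, hence a measure-zero subset. Removing this thin set in addition to any bad set already excluded in Theorem~\ref{T:main theorem} leaves a full-measure collection of metrics for which every closed geodesic on $M_K$ of length $\ell$ is one of the non-simple lifts paired with $\tilde\alpha$; in particular $\ell \in L^s(M_H) \setminus L^s(M_K)$. The main obstacle is confirming that the witness produced in the proof of Theorem~\ref{T:main theorem} really has this asymmetric simple/non-simple form, rather than merely producing a length at which the simple-length multiplicities disagree; once the witness has the correct structure, the remainder is routine.
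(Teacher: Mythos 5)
Your first half (the length sets agree because Sunada already gives equality of the length spectra as multisets) matches the paper, and your identification of the witness is also essentially right: the proof of Theorem~\ref{T:main theorem} produces a length $\ell=\length_{[m]}(\alpha)$ realized on one cover only by simple closed geodesics ($\beta_1^K,\beta_2^K$ and their images under $\tau_K$) and on the other only by nonsimple ones (you have the roles of $M_H$ and $M_K$ swapped, but that is immaterial).

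The genuine gap is in your final step. You dispose of ``unrelated'' geodesics of length $\ell$ by asserting that the locus in $\T(M_0)$ where two distinct primitive conjugacy classes share a length is a countable union of \emph{proper} analytic subvarieties, hence measure zero. That properness is exactly what can fail: by Randol's examples (flagged explicitly in the paper's introduction), there exist distinct, non-conjugate classes $\gamma,\gamma'$ with $\length_m(\gamma)=\length_m(\gamma')$ for \emph{every} hyperbolic metric $m$, so the coincidence locus can be all of $\T(M_0)$. The real-analyticity of length functions (Theorem~\ref{T:length_function}) only gives the dichotomy ``identically equal or almost never equal,'' and the identically-equal branch must be excluded by hand. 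That is the entire content of Lemma~\ref{L:trace check}, which the paper proves via intersection-number constraints (Theorem~\ref{T:geometric_intersection}, also needed to pin down the rational factor $k\in\{1,1/2,1/4,1/8\}$ coming from the covering degree on each component), a homology constraint (Theorem~\ref{T:homology}), an edge-count analysis on a spine, an enumeration of $4320$ candidate words, and an explicit trace computation showing only $\alpha$ and $\tau(\alpha)$ survive. Your proposal assumes away precisely this hard case, so as written the argument does not close.
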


This corollary shows that the construction of length equivalent manifolds in~\cite{Leininger2} does not necessarily give simple length equivalent manifolds.

\bigskip

{\bf Outline of the paper.} Section~\ref{sec:background} contains the relevant background.
In Section~\ref{sec:proof of the main theorem}, we give the proof of the main theorem. The sketch of the proof is as follow. We begin by defining a surjective homomorphism $\rho:\pi_{1}(M_0) \to G$ and a closed curve $\alpha$ in $M_0$. By Sunada's construction, the covering spaces $\pi_H:M_H \to M_0$ and $\pi_K:M_K \to M_0$ corresponding to the subgroups $H$ and $K$ are iso-length spectral. We then show that, for almost every $[m]\in\T(M_0)$, the induced metrics on $M_H$ and $M_K$ have the following property. In each of these two covering spaces $M_H$ and $M_K$, there are exactly four closed geodesics having the same length as $\alpha$, namely the two degree-one components of $\pi_H^{-1}(\alpha)$ (and $\pi_K^{-1}(\alpha)$) and their images under the lifts of the hyperelliptic involution $\tau:M_0 \to M_0$. We also show that these four closed geodesics on $M_H$ are nonsimple while the other four closed geodesics on $M_K$ are simple. Therefore $M_H$ and $M_K$ are not simple iso-length spectral.

We remark on one subtlety of the proof. According to \cite{Randol}, there are curves $\gamma, \gamma'$ on $M_0$ such that for every hyperbolic metric $m$ on $M_0$, $\length_m(\gamma)=\length_m(\gamma')$. Although these are nonsimple on $M_0$, they become simple in a finite sheeted cover, so must be accounted for in our proof.
%%%%%%%%%%%%%%%%%%%%%%%%%%%%%%%%%%%%%%%%%%%%%%%%%%%%%%%%%%%%%%%%%%%%%%%%%%%%%%%%%%%%%%%%%%%%%%%
%%%%%%%%%%%%%%%%%%%%%%%%%%%%%%%%%%%%%%%%%%%%%%%%%%%%%%%%%%%%%%%%%%%%%%%%%%%%%%%%%%%%%%%%%%%%%%%
%%%%%%%%%%%%%%%%%%%%%%%%%%%%%%%%%%%%%%%%%%%%%%%%%%%%%%%%%%%%%%%%%%%%%%%%%%%%%%%%%%%%%%%%%%%%%%%
%%%%%%%%%%%%%%%%%%%%%%%%%%%%%%%%%%%%%%%%%%%%%%%%%%%%%%%%%%%%%%%%%%%%%%%%%%%%%%%%%%%%%%%%%%%%%%%
%%%%%%%%%%%%%%%%%%%%%%%%%%%%%%%%%%%%%%%%%%%%%%%%%%%%%%%%%%%%%%%%%%%%%%%%%%%%%%%%%%%%%%%%%%%%%%%
\section{Background}
\label{sec:background}
%%%%%%%%%%%%%%%%%%%%%%%%%%%%%%%%%%%%%%%%%%%%%%%%%%%%%%%%%%%%%%%%%%%%%%%%%%%%%%%%%%%%%%%%%%%%%%%
%%%%%%%%%%%%%%%%%%%%%%%%%%%%%%%%%%%%%%%%%%%%%%%%%%%%%%%%%%%%%%%%%%%%%%%%%%%%%%%%%%%%%%%%%%%%%%%
%%%%%%%%%%%%%%%%%%%%%%%%%%%%%%%%%%%%%%%%%%%%%%%%%%%%%%%%%%%%%%%%%%%%%%%%%%%%%%%%%%%%%%%%%%%%%%%
%%%%%%%%%%%%%%%%%%%%%%%%%%%%%%%%%%%%%%%%%%%%%%%%%%%%%%%%%%%%%%%%%%%%%%%%%%%%%%%%%%%%%%%%%%%%%%%
%%%%%%%%%%%%%%%%%%%%%%%%%%%%%%%%%%%%%%%%%%%%%%%%%%%%%%%%%%%%%%%%%%%%%%%%%%%%%%%%%%%%%%%%%%%%%%%
%%%%%%%%%%%%%%%%%%%%%%%%%%%%%%%%%%%%%%%%%%%%%%%%%%%%%%%%%%%%%%%%%%%%%%%%%%%%%%%%%%%%%%%%%%%%%%%
Let $M$ be a closed oriented surface of genus $g \geq 2$. We denote the Teichm\"uller space of $M$ by
\[
 \T(M)=\left\{[m]\mid m\:\:\mbox{is a hyperbolic metric on}\;M\right\},
\]
where $[m]$ represents the equivalence class via the equivalence relation $m \sim m'$ if there exists an isometry $f:(M,m) \to (M,m')$ such that $f \simeq id_M $, see e.g. \cite{BuserBook}. 

Given $[m]\in\T(M)$, the holonomy homomorphism 
\[
\rho_m:\pi_1(M) \to \PSL(\BR)
\]
is well defined up to conjugation in $\PSL(\BR)$. This determines an embedding 
\begin{equation}
\label{eq:embedding}
\T(M) \to \Hom(\pi_1(M),\PSL(\BR))/\mbox{conjugation}
\end{equation}
by $[m]\mapsto[\rho_m]$.

Let $\gamma$ be an essential closed curve on $M$. The length function of $\gamma$
\[
\length_{(\cdot)}(\gamma): \T(M) \to \BR_+
\]
is defined as the length of the $m$-geodesic homotopic to $\gamma$.  Using the holonomy homomorphism, one can compute
\begin{equation}
\label{eq:length_function}
\length_{[m]}(\gamma)=2\cosh^{-1}
\left(
\frac
{\left|
\tr(
\rho_m(\gamma)
)
\right|}
{2}
\right).
\end{equation}

The embedding~(\ref{eq:embedding}) makes $\T(M)$ into a real analytic manifold. By~(\ref{eq:length_function}), the length functions are analytic (see e.g. \cite{kerckhoff} or \cite{Abikoff}). Since $\T(M)$ is connected, we then have the following theorem; see \cite{McShane}.

\begin{thm}
\label{T:length_function}
Let $c\in\BR$, $\alpha$ and $\beta$ be closed curves on $M$. The function
\[
f=c\cdot\length_{(\cdot)}(\beta)-\length_{(\cdot)}(\alpha):\T(M) \to \BR
\]
is real analytic, in particular, $f\neq0$ almost everywhere or $f=0$ everywhere.
\end{thm}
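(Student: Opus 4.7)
The plan is to reduce Theorem~\ref{T:length_function} to the classical identity principle for real analytic functions on a connected real analytic manifold: such a function either vanishes identically or has a zero set of Lebesgue measure zero.

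First I would verify that each length function $\length_{(\cdot)}(\gamma)$ is real analytic on $\T(M)$. The embedding~(\ref{eq:embedding}) endows $\T(M)$ with the structure of a real analytic manifold, and the evaluation map $[m]\mapsto \rho_m(\gamma)\in\PSL(\BR)$ is analytic, being the composition of the embedding with evaluation at the element $\gamma\in\pi_1(M)$ on the representation variety. Since $\gamma$ is essential and $m$ is hyperbolic, $\rho_m(\gamma)$ is a hyperbolic isometry, so $|\tr(\rho_m(\gamma))|/2>1$, placing its absolute value inside the domain of analyticity of $\cosh^{-1}$. Formula~(\ref{eq:length_function}) therefore expresses $\length_{[m]}(\gamma)$ as a composition of analytic maps, so it is analytic. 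Consequently $f=c\cdot\length_{(\cdot)}(\beta)-\length_{(\cdot)}(\alpha)$ is analytic.

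Next I would invoke the connectedness of $\T(M)$, which is immediate from the standard fact that $\T(M)$ is homeomorphic to $\BR^{6g-6}$ via Fenchel--Nielsen coordinates. Applying the identity theorem to the analytic function $f$ on the connected analytic manifold $\T(M)$ yields the desired dichotomy: either $f\equiv 0$, or the zero set of $f$ has Lebesgue measure zero, so $f\neq 0$ almost everywhere. Concretely, if $f$ vanishes on a set of positive measure, then by Lebesgue density its zero set has a limit point where the full Taylor series must vanish in some chart, and connectedness propagates this vanishing to all of $\T(M)$.

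The only subtle point is checking that the formula~(\ref{eq:length_function}) really yields an analytic function everywhere on $\T(M)$; this is handled by the observation that $\rho_m(\gamma)$ remains hyperbolic on all of $\T(M)$ whenever $\gamma$ is essential, so the argument of $\cosh^{-1}$ stays uniformly bounded away from the branch point at $1$. Beyond this, the proof is a clean assembly of standard facts: analyticity of the holonomy embedding, analyticity of length functions via the trace formula, connectedness of $\T(M)$, and the real analytic identity principle.
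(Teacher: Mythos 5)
Your proposal is correct and follows essentially the same route the paper takes (the paper only sketches this before the theorem statement: analyticity of length functions via the holonomy embedding and the trace formula, connectedness of $\T(M)$, and the real analytic identity principle, citing McShane). The one place where your wording is loose is the final step: a real analytic function of several variables can vanish on a set with limit points without vanishing identically, so the correct invocation is that vanishing on a set of \emph{positive measure} forces all partials to vanish at a point of Lebesgue density one, which is what you seem to intend.
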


Let $\gamma$ and $\gamma'$ be closed curves on $M$. The geometric intersection number of $\gamma$ and $\gamma'$ is defined by 
\[
i(\gamma,\gamma')=\min_{\overline{\gamma},\overline{\gamma}\:'}|\left(\overline{\gamma}\times\overline{\gamma}\:'\right)^{-1}\left(\Delta\right)|,
\]
where $\overline{\gamma}$ and $\overline{\gamma}\:'$ are in the homotopy classes $[\gamma]$ and $[\gamma']$, respectively, $\overline{\gamma}\times\overline{\gamma}\:':S^1 \times S^1 \to M \times M$, and $\Delta \subset M \times M$ is diagonal.

The next theorem provides a tool for dealing with the phenomenon arising from \cite{Randol}.
\begin{thm}
\label{T:geometric_intersection}
Let $\gamma$, $\gamma'$ be closed curves on $M$ and $k\in\BR$. 
 
If \;$\length_m(\gamma)=k\cdot\length_m(\gamma')$, for all $[m]\in \T(M)$, then $i(\gamma,\alpha)=k\cdot i(\gamma',\alpha)$, for all simple closed curves $\alpha$ on $M$.
\end{thm}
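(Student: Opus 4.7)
The plan is to detect the geometric intersection numbers $i(\gamma,\alpha)$ and $i(\gamma',\alpha)$ as the rates at which the length functions blow up along a path in Teichm\"uller space that pinches the simple closed curve $\alpha$. The underlying principle is that, in a deeply pinched neck around $\alpha$, each essential crossing of $\alpha$ forces a geodesic to traverse a long collar, and the contribution to the total length is precisely proportional to the minimal number of crossings.

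First, I extend $\alpha$ to a pants decomposition of $M$ and use the associated Fenchel--Nielsen coordinates on $\T(M)$. I then choose a one-parameter family $[m_t]\in\T(M)$, $t\in(0,1)$, for which $\length_{m_t}(\alpha)=t$ and all other Fenchel--Nielsen coordinates are held fixed.

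Second, I invoke the Collar Lemma (see \cite{BuserBook}): the simple geodesic $\alpha$ in $(M,m_t)$ has an embedded annular collar whose half-width $w(t)$ satisfies $\sinh(w(t))\sinh(t/2)=1$, so $w(t)=|\log t|+O(1)$ as $t\to 0^+$. Any closed geodesic $\delta$ on $(M,m_t)$ not homotopic to $\alpha$ meets $\alpha$ transversely in exactly $i(\delta,\alpha)$ points, and each crossing contributes a subarc of length at least $2w(t)$. Since the other Fenchel--Nielsen parameters stay fixed, the complement of the collar sits in a compact family of bordered hyperbolic surfaces, so the portion of $\delta$ outside the collar has length bounded independently of $t$. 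Together these give the asymptotic
\[
\length_{m_t}(\delta)\;=\;2\,i(\delta,\alpha)\,|\log t|\;+\;O(1)\qquad(t\to 0^+),
\]
valid for every closed curve $\delta$ on $M$ (the case $\delta\simeq\alpha$ being trivial).

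Finally, I apply this to $\gamma$ and $\gamma'$. By hypothesis $\length_{m_t}(\gamma)=k\cdot\length_{m_t}(\gamma')$ for all $t$; dividing both sides by $|\log t|$ and letting $t\to 0^+$ eliminates the $O(1)$ terms and yields $2\,i(\gamma,\alpha)=2k\,i(\gamma',\alpha)$, which is the desired identity. The main obstacle is justifying the asymptotic: the lower bound is a direct consequence of the Collar Lemma, but the upper bound requires the observation that pinching one Fenchel--Nielsen length keeps the rest of the surface in a compact part of moduli space, so that geodesic representatives can be controlled uniformly outside the collar.
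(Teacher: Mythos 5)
Your proposal is correct and takes essentially the same approach as the paper: both arguments produce a degenerating family of metrics along which length functions, suitably normalized, converge to intersection number with $\alpha$, and then pass to the limit in the identity $\length_m(\gamma)=k\cdot\length_m(\gamma')$. The only difference is that the paper simply asserts the existence of such a sequence $[m_n]$ (citing \cite{Leininger}), whereas you construct it explicitly by pinching $\alpha$ in Fenchel--Nielsen coordinates and justify the normalization $2\,i(\delta,\alpha)\,|\log t|+O(1)$ via the Collar Lemma.
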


\begin{proof}
For $k = 1$, a proof can be found in \cite{Leininger}, for example. The same idea works here, and we sketch it.

Given a simple closed curve $\alpha$, there exists a sequence $\left\{\left[m_n\right]\right\}\subset \T(M)$ such that 
\[
\frac{1}{n}\cdot\length_{\left[m_n\right]}(\eta) \to i(\eta,\alpha),
\]
for all closed curves $\eta$ on $M$.

Now suppose $\length_{[m]}(\gamma)=k\cdot\length_{[m]}(\gamma')$ for all $[m] \in \T(M)$. Then
\[
\frac{1}{n}\cdot\length_{\left[m_n\right]}(\gamma) \to i(\gamma,\alpha)
\]
and
\[
\frac{k}{n}\cdot\length_{\left[m_n\right]}(\gamma') \to k\cdot i(\gamma',\alpha).
\]
So $k\cdot i(\gamma',\alpha)=i(\gamma,\alpha)$.
\end{proof}
The following theorem is shown in \cite{Leininger}.
\begin{thm}
\label{T:homology}
Given $\gamma$ and $\gamma'$ closed curves on $M$, if 
\[
\length_{[m]}(\gamma)=\length_{[m]}(\gamma'),
\]
for all $[m]\in \T(M)$, then $[\gamma]=\pm[\gamma']$ in $H_1(M)$.
\end{thm}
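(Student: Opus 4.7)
My plan is to translate the hypothesis into a trace identity on the $\PSL(\mathbb{C})$ character variety of $\pi_{1}(M)$ and then evaluate that identity at reducible (diagonal) representations in order to read off the homology classes.

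By~(\ref{eq:length_function}), the assumption $\length_{[m]}(\gamma)=\length_{[m]}(\gamma')$ on $\T(M)$ is equivalent to $\tr^{2}(\rho_{m}(\gamma))=\tr^{2}(\rho_{m}(\gamma'))$ for every Fuchsian holonomy, since $\tr^{2}$ is well defined on $\PSL(\BR)$. The function $F(\rho):=\tr^{2}(\rho(\gamma))-\tr^{2}(\rho(\gamma'))$ is a regular function on the $\PSL(\mathbb{C})$ character variety $X$ of $\pi_{1}(M)$, vanishing on the image of $\T(M)$. That image is an open real-analytic submanifold of real dimension $6g-6$ sitting as a maximal totally real subset of a complex irreducible component $X_{0}\subset X$ of the same complex dimension, so $F$ vanishes identically on $X_{0}$.

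For any character $\chi\in\Hom(\pi_{1}(M),\mathbb{C}^{\ast})$, which factors through $H_{1}(M;\BZ)$, the class of $\rho_{\chi}(g):=\mathrm{diag}(\chi(g),\chi(g)^{-1})$ lies in $X_{0}$ as an algebraic limit of Fuchsian representations. The identity $F([\rho_{\chi}])=0$ then reads
\[
\bigl(\chi([\gamma])+\chi([\gamma])^{-1}\bigr)^{2}=\bigl(\chi([\gamma'])+\chi([\gamma'])^{-1}\bigr)^{2},
\]
so $\chi([\gamma])\in\{\pm\chi([\gamma'])^{\pm 1}\}$ for every $\chi$. Each of these four conditions cuts out a coset of a closed algebraic subgroup of the character torus $\Hom(H_{1}(M;\BZ),\mathbb{C}^{\ast})\cong(\mathbb{C}^{\ast})^{2g}$; by irreducibility of this torus, one coset must be everything, and evaluating at the trivial character rules out the two cosets involving a $-1$. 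Hence $\chi([\gamma])=\chi([\gamma'])^{\pm 1}$ for all $\chi$, and since characters separate points of $H_{1}(M;\BZ)\cong\BZ^{2g}$, we conclude $[\gamma]=\pm[\gamma']$ in $H_{1}(M;\BZ)$.

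The main obstacle is the second step: justifying that the Teichm\"uller locus sits inside a single complex-irreducible component $X_{0}$ of $X$ whose Zariski closure contains the reducible (abelian) representations. These are classical but nontrivial facts about $\PSL(\mathbb{C})$ character varieties of closed surface groups, needing algebraic-geometric input beyond Section~\ref{sec:background}. A more hands-on alternative is to realize each $\rho_{\chi}$ as an explicit algebraic limit of a one-parameter family of Fuchsian representations---for example by pinching along simple closed curves until the holonomy degenerates onto an abelian rep---and to pass the identity $F=0$ through the limit directly, sidestepping any global statement about $X$.
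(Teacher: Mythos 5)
The paper does not actually prove Theorem~\ref{T:homology}; it only cites \cite{Leininger}, and your argument follows essentially the same route as that reference: promote the length identity to a trace identity on the character variety, then evaluate at diagonal (abelian) representations. Your outline is correct, including the endgame on the character torus (the four conditions are Zariski-closed cosets covering the irreducible torus $(\mathbb{C}^{\ast})^{2g}$, the trivial character kills the two sign-twisted cosets, and characters separate points of the torsion-free group $H_1(M;\BZ)$). The one step you flag is indeed where the real content lies, but it does not fail: since Fuchsian representations of a closed surface and the diagonal representations both lift to $\mathrm{SL}_2$, you may work in the $\mathrm{SL}_2(\mathbb{C})$ character variety, which is irreducible for a closed surface group (Rapinchuk--Benyash-Krivetz--Chernousov), so there is only one component and the reducible characters automatically lie in the Zariski closure of the Teichm\"uller locus; vanishing of the regular function $F$ there follows because the Fuchsian locus is a totally real submanifold of maximal dimension, hence not contained in any proper subvariety. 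With that input made explicit (or with your suggested pinching-to-an-abelian-limit argument), the proof is complete.
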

%%%%%%%%%%%%%%%%%%%%%%%%%%%%%%%%%%%%%%%%%%%%%%%%%%%%%%%%%%%%%%%%%%%%%%%%%%%%%%%%%%%%%%%%%%%%%%%
%%%%%%%%%%%%%%%%%%%%%%%%%%%%%%%%%%%%%%%%%%%%%%%%%%%%%%%%%%%%%%%%%%%%%%%%%%%%%%%%%%%%%%%%%%%%%%%
%%%%%%%%%%%%%%%%%%%%%%%%%%%%%%%%%%%%%%%%%%%%%%%%%%%%%%%%%%%%%%%%%%%%%%%%%%%%%%%%%%%%%%%%%%%%%%%
%%%%%%%%%%%%%%%%%%%%%%%%%%%%%%%%%%%%%%%%%%%%%%%%%%%%%%%%%%%%%%%%%%%%%%%%%%%%%%%%%%%%%%%%%%%%%%%
%%%%%%%%%%%%%%%%%%%%%%%%%%%%%%%%%%%%%%%%%%%%%%%%%%%%%%%%%%%%%%%%%%%%%%%%%%%%%%%%%%%%%%%%%%%%%%%
\section{Proof of the main theorem}
\label{sec:proof of the main theorem}
%%%%%%%%%%%%%%%%%%%%%%%%%%%%%%%%%%%%%%%%%%%%%%%%%%%%%%%%%%%%%%%%%%%%%%%%%%%%%%%%%%%%%%%%%%%%%%%
%%%%%%%%%%%%%%%%%%%%%%%%%%%%%%%%%%%%%%%%%%%%%%%%%%%%%%%%%%%%%%%%%%%%%%%%%%%%%%%%%%%%%%%%%%%%%%%
%%%%%%%%%%%%%%%%%%%%%%%%%%%%%%%%%%%%%%%%%%%%%%%%%%%%%%%%%%%%%%%%%%%%%%%%%%%%%%%%%%%%%%%%%%%%%%%
%%%%%%%%%%%%%%%%%%%%%%%%%%%%%%%%%%%%%%%%%%%%%%%%%%%%%%%%%%%%%%%%%%%%%%%%%%%%%%%%%%%%%%%%%%%%%%%
%%%%%%%%%%%%%%%%%%%%%%%%%%%%%%%%%%%%%%%%%%%%%%%%%%%%%%%%%%%%%%%%%%%%%%%%%%%%%%%%%%%%%%%%%%%%%%%

Let $M_0$ be a closed oriented surface of genus 2. We write the fundamental group of $M_0$ as $\pi_{1}(M_0)=\langle a,b,c,d|[a,b][c,d]=1 \rangle$, see Figure~\ref{F:2g with generators}.
%%%%%%%%%%%%%%%%%%%%%%%%%%%%%%%%%%%%%%%%%%%%%%%%%%%%%%%%%%%%%%%%%%%%%%%%%%%%%%%%%%%%%%%%%%%%
\begin{figure}[ht]
\begin{center}
\includegraphics[height=2cm]{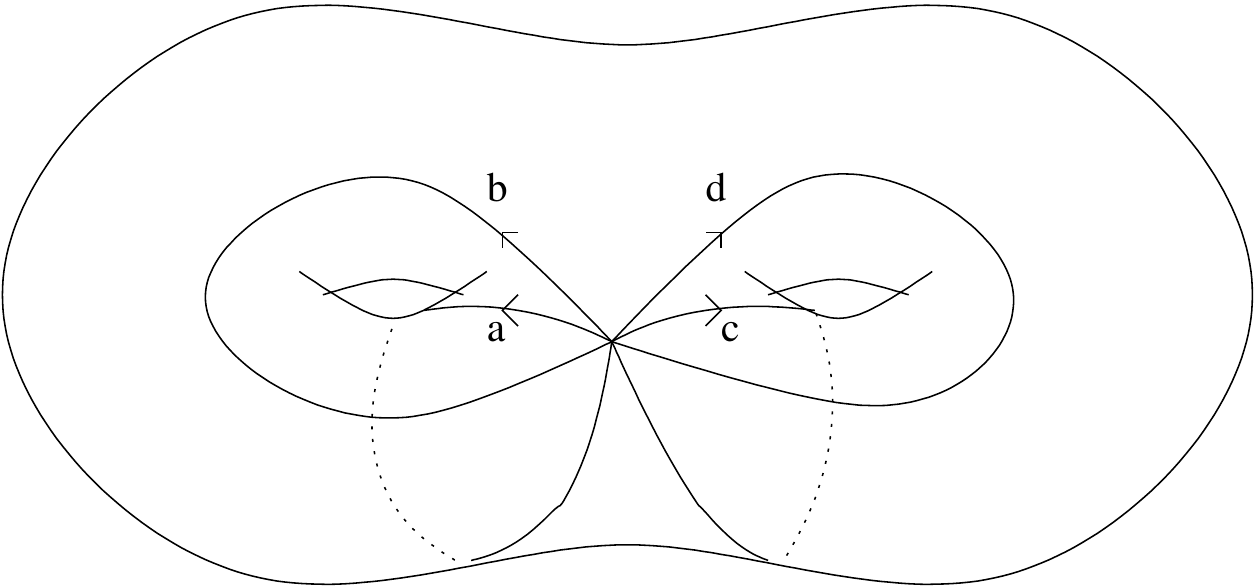} 
\caption{$M_0$ with the generators of $\pi_{1}(M_0)$.} 
\label{F:2g with generators}
\end{center}
\end{figure}
%%%%%%%%%%%%%%%%%%%%%%%%%%%%%%%%%%%%%%%%%%%%%%%%%%%%%%%%%%%%%%%%%%%%%%%%%%%%%%%%%%%%%%%%%%%%

Let $G$, $H$ and $K$ be groups given in the example in Section~\ref{sec:Introduction}. We define a surjective homomorphism $\rho:\pi_{1}(M_0) \to G$ by
\[ 
\rho(a)=(3,0), \quad
\rho(b)=(5,0), \quad
\rho(c)=(1,0), \quad\mbox{and}\quad
\rho(d)=(1,1). 
\]

Let $\pi:M \to M_0$, $\pi_H:M_H \to M_0$ and $\pi_K:M_K \to M_0$ be the covering spaces of $M_0$ corresponding to $\ker(\rho)$, $\rho^{-1}(H)$ and $\rho^{-1}(K)$, respectively. 

To help visualizing the covering space M, first we construct the covering space $\pi:M_N \to M_0$ corresponding to the subgroup $N=\BZ/8\BZ$ of $G$, as shown in Figure~\ref{F:the covering space $M_N$}. Then we construct $M$ from the surjective homomorphism $\sigma:\pi_{1}(M_N) \to N$, the restriction of $\rho$ to $\pi_{1}(M_N) < \pi_{1}(M_0)$, see Figure~\ref{F:the covering space $M$}. Observe that the generator of $\BZ/8\BZ \cong N < G$ translates each piece in Figure~\ref{F:the covering space $M$} to the right, and sends the last piece to the first piece.
%%%%%%%%%%%%%%%%%%%%%%%%%%%%%%%%%%%%%%%%%%%%%%%%%%%%%%%%%%%%%%%%%%%%%%%%%%%%%%%%%%%%%%%%%%%%
\begin{figure}[ht]
\begin{center}
\includegraphics[height=4cm]{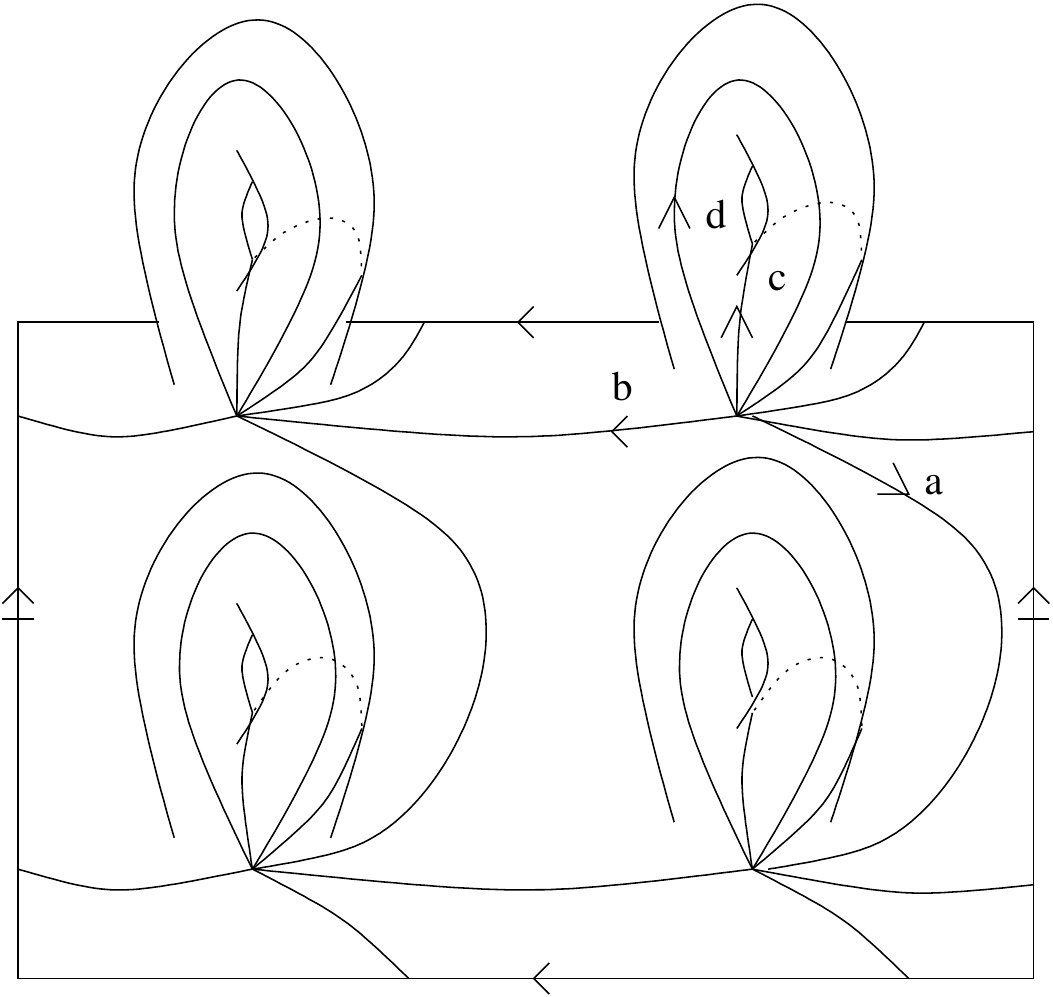} 
\caption{The covering space $M_N$.} 
\label{F:the covering space $M_N$}
\end{center}
\end{figure}
%%%%%%%%%%%%%%%%%%%%%%%%%%%%%%%%%%%%%%%%%%%%%%%%%%%%%%%%%%%%%%%%%%%%%%%%%%%%%%%%%%%%%%%%%%%%
\begin{figure}[ht]
\begin{center}
\includegraphics[height=6cm]{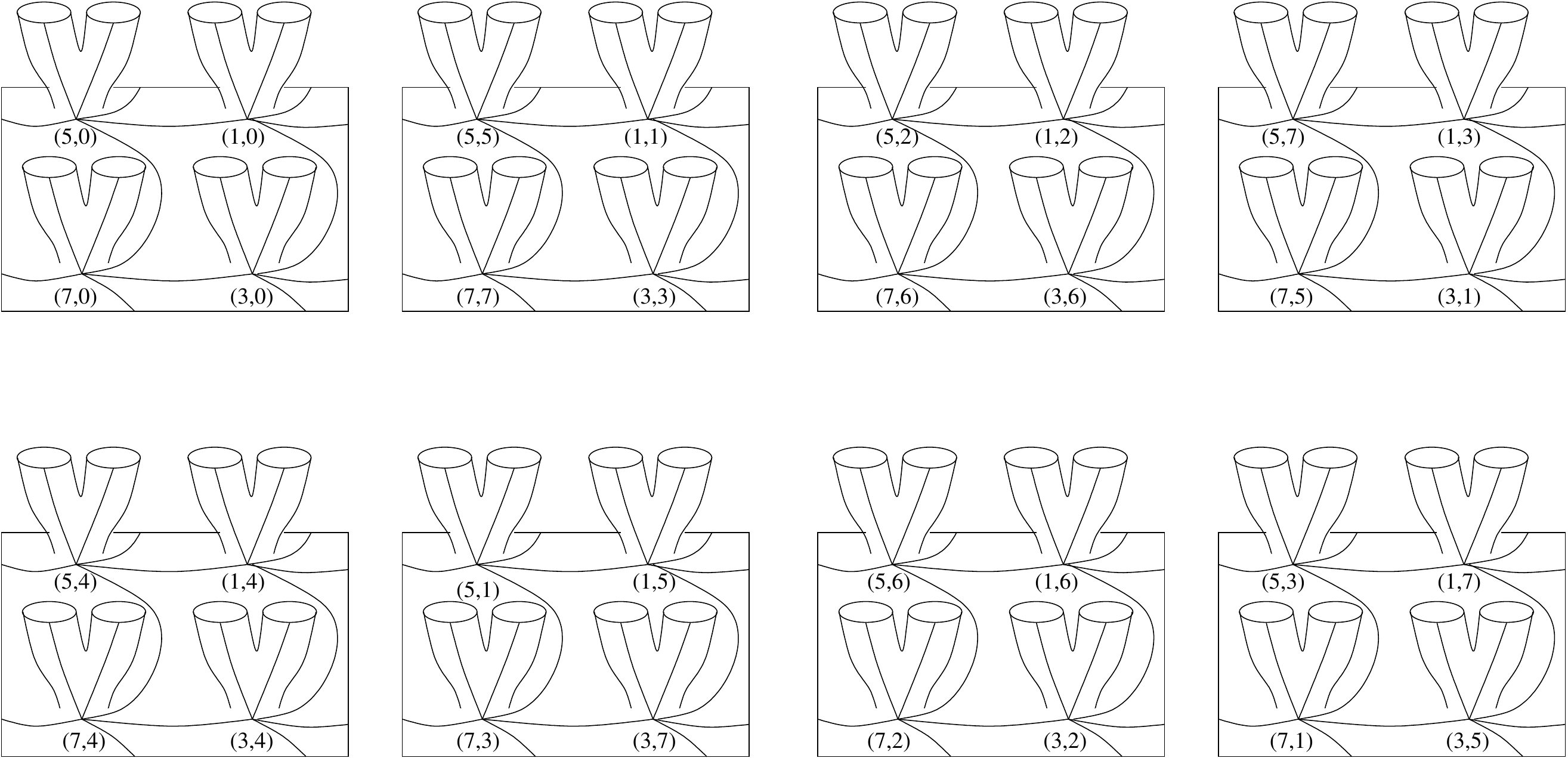} 
\caption{The covering space $M$.} 
\label{F:the covering space $M$}
\end{center}
\end{figure}
%%%%%%%%%%%%%%%%%%%%%%%%%%%%%%%%%%%%%%%%%%%%%%%%%%%%%%%%%%%%%%%%%%%%%%%%%%%%%%%%%%%%%%%%%%%%
\begin{lem}
\label{L:beta}
Let $\alpha = abd[d,c^{-1}]d^{-1}$ be a closed curve on $M_0$. Then $\pi_{H}^{-1}(\alpha)=\beta_{1}^{H}\cup\dots\cup\beta_{5}^{H}$, $\pi_{K}^{-1}(\alpha)=\beta_{1}^{K}\cup\dots\cup\beta_{5}^{K}$ where $\pi_{H}|_{\beta_{i}^{H}}$, $\pi_{K}|_{\beta_{i}^{K}}$ are degree one, for $i=1,2$, and degree two, for $i=3, 4, 5$. Furthurmore $\beta_{1}^{H}$, $\beta_{2}^{H}$ are nonsimple and $\beta_{1}^{K}$, $\beta_{2}^{K}$ are simple.
\end{lem}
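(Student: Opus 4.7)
The plan is to split the argument into a covering-space orbit calculation, which yields the degree pattern, and a picture-based self-intersection analysis, which distinguishes $H$ from $K$. For the first part, I would use the standard fact that the connected components of $\pi_H^{-1}(\alpha)$ correspond bijectively to the $\langle\rho(\alpha)\rangle$-orbits on the coset space $G/H$ under left multiplication, with component degree equal to orbit size. Using the semidirect product law $(u_1, v_1)(u_2, v_2) = (u_1 u_2,\, v_1 + u_1 v_2)$ in $G$, applying $\rho$ term-by-term to $\alpha = abd^2 c^{-1} d^{-1} c d^{-1}$ gives $\rho(\alpha) = (7, 0)$, an involution lying in both $H$ and $K$. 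Writing the $8$ cosets of $H$ as $C_v = \{(u,v) : u \in \{1,3,5,7\}\}$ for $v \in \BZ/8\BZ$, the formula $(7,0)\cdot C_v = C_{7v}$ has exactly two fixed cosets $C_0, C_4$ and three transpositions $\{C_1, C_7\}, \{C_2, C_6\}, \{C_3, C_5\}$; listing the $8$ cosets of $K$ directly from the definition yields the same orbit pattern, so both preimages split as five components with degree sequence $(1,1,2,2,2)$, the degree-one components being precisely the lifts over the fixed cosets.

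For the second part, my plan is to realize the degree-one components $\beta_1^H, \beta_2^H$ and $\beta_1^K, \beta_2^K$ as explicit loops by lifting $\alpha$ edge-by-edge through the intermediate cover $M_N$ of Figure~\ref{F:the covering space $M_N$} and then through the full cover $M$ of Figure~\ref{F:the covering space $M$}, and projecting down to $M_H$ and $M_K$ as the quotients of $M$ by the actions of $H$ and $K$. This reduces simplicity to a finite combinatorial check on the pictured cover: at each self-intersection of $\alpha$ in $M_0$ — in particular those forced by the commutator factor $[d, c^{-1}]$ — one reads off the two branches of $\alpha$ meeting there and decides whether they lift to a common preimage on the degree-one component upstairs. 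The difference between $H$ and $K$ enters precisely here, because $K$ replaces the entries $(3,0), (5,0) \in H$ with $(3,4), (5,4)$, which shifts the identifications of sheets over the $d$-handle; the expected outcome is that this shift reroutes the crossing branches onto one of the degree-two components in $M_K$ while leaving them on the degree-one component in $M_H$.

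The main obstacle is the self-intersection bookkeeping in part two. Since $\alpha$ is genuinely nonsimple on $M_0$, its preimages accumulate many crossings, each of which has to be assigned to a specific component. This is also where the subtlety flagged at the end of the introduction — Randol's observation that a nonsimple curve on $M_0$ can become simple in a finite cover — genuinely matters: to conclude, one must simultaneously verify that at least one crossing of $\beta_i^H$ survives free homotopy and exhibit explicit simple representatives of $\beta_i^K$ drawn in the picture of $M_K$. For the first verification the natural tool is the geodesic representative, which realizes the minimum number of self-intersections in its free-homotopy class; for the second, a direct drawing on the cover $M_K$ should suffice.
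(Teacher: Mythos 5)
Your first part is correct and, up to bookkeeping conventions, recovers exactly the paper's count. The identity $\rho(\alpha)=(7,0)$ is right, and the orbit analysis of $(7,0)$ on the eight cosets of $H$ (and of $K$) does give the degree pattern $(1,1,2,2,2)$. The paper reaches the same conclusion one level up, in the cover $M$ corresponding to $\ker\rho$, where $\pi^{-1}(\alpha)$ consists of sixteen simple closed curves permuted by $G$ and the components of $\pi_H^{-1}(\alpha)$ are the images of the $H$-orbits (of sizes $2,2,4,4,4$); the two computations are equivalent, and yours is the more economical way to get the degrees.

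The gap is in the second part, which is where the real content of the lemma lives. You correctly identify the method (reduce simplicity of the degree-one components to a finite combinatorial check) and you correctly flag the subtleties, including one the paper glosses over: a non-embedded representative does not by itself show a free homotopy class is nonsimple. But the decisive verification is left as ``the expected outcome is that this shift reroutes the crossing branches\ldots''; you never actually determine which crossings of $\alpha$ lift to self-crossings of $\beta_1^H$ but not of $\beta_1^K$, and nothing in your part one can force this asymmetry, since $H$ and $K$ give identical orbit and degree data. This check is the whole point of the lemma. The paper's organization of it is worth adopting because it is much cleaner than crossing-by-crossing bookkeeping in $M_0$: upstairs in $M$ every component $\gamma_i$ of $\pi^{-1}(\alpha)$ is simple, the degree-one components are the images of the two-element orbits $\left\{\gamma_1,\gamma_9\right\}$, $\left\{\gamma_5,\gamma_{13}\right\}$ (for $H$) and $\left\{\gamma_1,\gamma_{13}\right\}$, $\left\{\gamma_5,\gamma_9\right\}$ (for $K$), and since the quotient of a deck-group-invariant embedded $1$-manifold is embedded, simplicity of such a component is exactly pairwise disjointness of the two curves in its orbit. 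The lemma then reduces to the four checks $\gamma_1\cap\gamma_9\neq\emptyset$, $\gamma_5\cap\gamma_{13}\neq\emptyset$, $\gamma_1\cap\gamma_{13}=\emptyset$, $\gamma_5\cap\gamma_9=\emptyset$, read off from Figures~\ref{F:the covering space $M$} and~\ref{F:the covering space $M$ and alpha}. (For the nonsimple cases one should, as you note, also confirm that the intersection of $\gamma_1$ with $\gamma_9$ is essential, e.g.\ by passing to geodesic representatives; neither your sketch nor the paper carries this out explicitly.)
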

%%%%%%%%%%%%%%%%%%%%%%%%%%%%%%%%%%%%%%%%%%%%%%%%%%%%%%%%%%%%%%%%%%%%%%%%%%%%%%%%%%%%%%%%%%%%
\begin{figure}[ht]
\begin{center}
\includegraphics[height=2.5cm]{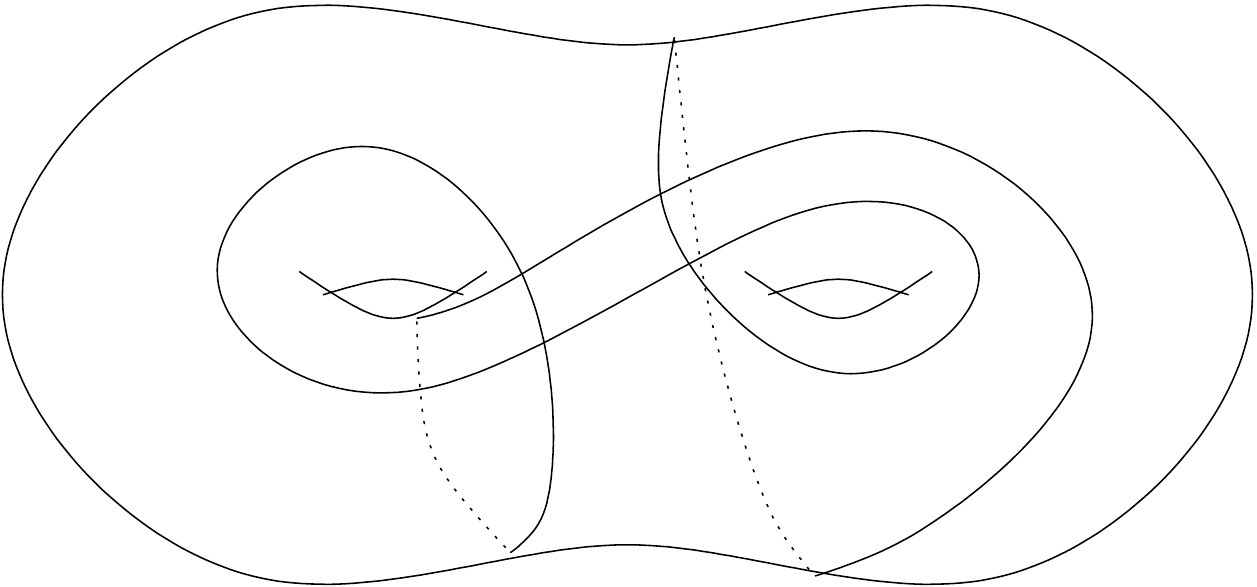} 
\caption{The closed curve $\alpha$ on $M_0$.} 
\label{F:Curve alpha}
\end{center}
\end{figure}
%%%%%%%%%%%%%%%%%%%%%%%%%%%%%%%%%%%%%%%%%%%%%%%%%%%%%%%%%%%%%%%%%%%%%%%%%%%%%%%%%%%%%%%%%%%%
\begin{figure}[ht]
\begin{center}
\includegraphics[height=6cm]{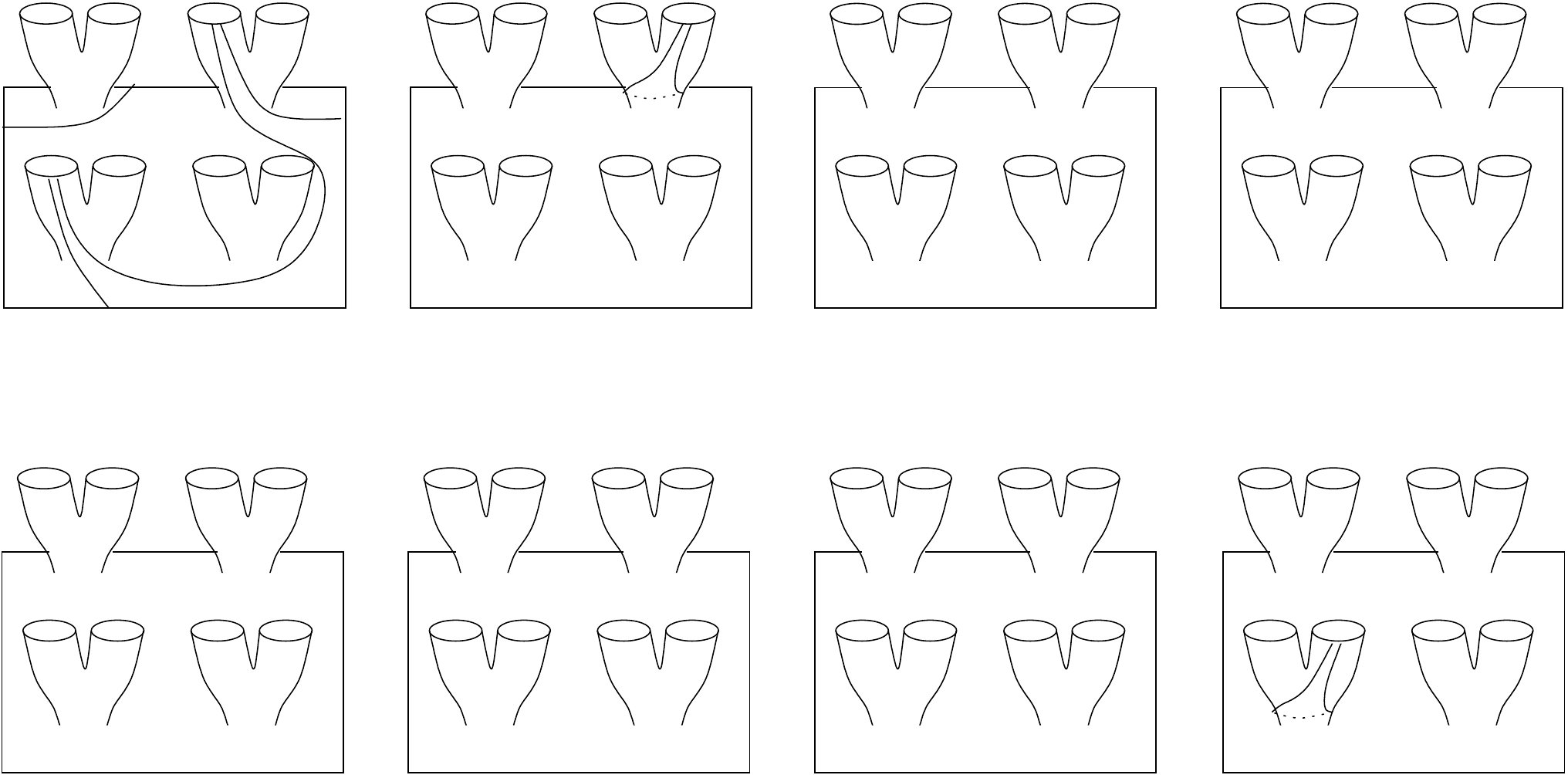} 
\caption{The covering space $M$ and a component $\gamma_{1}$ of $\pi^{-1}(\alpha)$.} 
\label{F:the covering space $M$ and alpha}
\end{center}
\end{figure}
%%%%%%%%%%%%%%%%%%%%%%%%%%%%%%%%%%%%%%%%%%%%%%%%%%%%%%%%%%%%%%%%%%%%%%%%%%%%%%%%%%%%%%%%%%%
\begin{proof}
First we look at a component $\gamma_1$ of $\pi^{-1}\left(\alpha\right)$ in $M$, see Figure~\ref{F:the covering space $M$ and alpha}. Observe that the preimage of  $\alpha$ is sixteen simple closed curves on M denotes $X=\left\{\gamma_{1},\dots,\gamma_{16}\right\}$. $G$ acts on $X$ and this action is equivalent to the action of $G$ on the cosets of $L=\Stab_{G}(\gamma_{1})=\left\{(1,0), (7,0)\right\}$. More precisely, the bijection 
\[G/\!/L \to X\]
given by 
\[gL \mapsto g\cdot\gamma_{1}\]
is equivariant with respect to the actions of $G$.
We assume $\left\{\gamma_{1},\dots,\gamma_{16}\right\}$ are numbered so that 
\[
	\begin{array}{rrrr}
	\gamma_{1} \rightarrow L, 			& \gamma_{2}\rightarrow (1,1)L, 	& \gamma_{3}\rightarrow(1,2)L, 	& \gamma_{4}\rightarrow(1,3)L,\\
	\gamma_{5} \rightarrow (1,4)L,	& \gamma_{6}\rightarrow (1,5)L, 	& \gamma_{7}\rightarrow(1,6)L, 	& \gamma_{8}\rightarrow(1,7)L,\\
	\gamma_{9} \rightarrow (3,0)L,	& \gamma_{10}\rightarrow (3,3)L,	& \gamma_{11}\rightarrow(3,6)L,	& \gamma_{12}\rightarrow(3,1)L,\\
	\gamma_{13}\rightarrow (3,4)L,	& \gamma_{14}\rightarrow (3,7)L,	& \gamma_{15}\rightarrow(3,2)L,	& \gamma_{16}\rightarrow(3,5)L.
	\end{array}
\]
We use the above representations to compute $H$ and $K$ orbits under the actions of $H$ and $K$ on $X$. Then the $H$ orbits partition $\left\{\gamma_{1},\dots,\gamma_{16}\right\}$ as
\[
\left\{\gamma_{1},\gamma_{9}\right\},
\left\{\gamma_{5},\gamma_{13}\right\},
\left\{\gamma_{2},\gamma_{8},\gamma_{10},\gamma_{16}\right\},
\left\{\gamma_{3},\gamma_{7},\gamma_{11},\gamma_{15}\right\},
\left\{\gamma_{4},\gamma_{6},\gamma_{12},\gamma_{14}\right\} 
\] 
and the $K$ orbits partition $\left\{\gamma_{1},\dots,\gamma_{16}\right\}$ as
\[
\left\{\gamma_{1},\gamma_{13}\right\},
\left\{\gamma_{5},\gamma_{9}\right\},
\left\{\gamma_{2},\gamma_{8},\gamma_{10},\gamma_{14}\right\},
\left\{\gamma_{3},\gamma_{7},\gamma_{11},\gamma_{15}\right\},
\left\{\gamma_{4},\gamma_{6},\gamma_{10},\gamma_{16}\right\}. 
\]
All closed curves in each $H$ orbit lie above exactly one closed curve on $M_{H}$ and all closed curves in each $K$ orbit lie above exactly one closed curve on $M_{K}$. So we can write $\pi_{H}^{-1}(\alpha)=\beta_{1}^{H}\cup\dots\cup\beta_{5}^{H}$ and $\pi_{K}^{-1}(\alpha)=\beta_{1}^{K}\cup\dots\cup\beta_{5}^{K}$. We may associate $\beta_{1}^{H}$, $\beta_{2}^{H}$, $\beta_{1}^{K}$ and $\beta_{2}^{K}$ with the orbits $\left\{\gamma_{1}, \gamma_{9}\right\}$, $\left\{\gamma_{5}, \gamma_{13}\right\}$, $\left\{\gamma_{1}, \gamma_{13}\right\}$ and $\left\{\gamma_{5}, \gamma_{9}\right\}$, respectively.

Next we observe that $\pi_{H}|_{\beta_{i}^{H}}$, $\pi_{K}|_{\beta_{i}^{K}}$ are degree one, for $i=1,2$, and degree two, for $i=3, 4, 5$.

For the simplicity of $\beta_{1}^{H}$, $\beta_{2}^{H}$, $\beta_{1}^{K}$ and $\beta_{2}^{K}$, we look at their associated orbits. We observe that $\gamma_{1}$ intersects $\gamma_{9}=(3,0)\cdot\gamma_{1}$ nontrivially by inspecting Figure~\ref{F:the covering space $M$} for the actions of $G$ and Figure~\ref{F:the covering space $M$ and alpha} for the picture of $\gamma_{1}$. Similarly we can compute
\[
\begin{array}{rr}
	\gamma_{1}\cap \gamma_{9\ }	\neq \emptyset,	&\gamma_{5}\cap \gamma_{13} \neq \emptyset, \\
	\gamma_{1}\cap \gamma_{13}	=    \emptyset,	&\gamma_{5}\cap \gamma_{9\ } =    \emptyset.
\end{array}
\]
Since the $H$ orbit $\left\{\gamma_{1}, \gamma_{9}\right\}$ corresponding to $\beta_{1}^{H}$ contains intersecting curves, $\beta_{1}^{H}$ is nonsimple. Similarly, $\beta_{2}^{H}$ is also nonsimple. Since the $K$ orbit $\left\{\gamma_{1}, \gamma_{13}\right\}$ corresponding to $\beta_{1}^{K}$ contains pairwise disjoint curves, $\beta_{1}^{K}$ is simple. Similarly, $\beta_{2}^{H}$ is also simple.
\end{proof}

To prove Theorem~\ref{T:main theorem}, we will show that generically a hyperbolic metric on $M_0$ lifted to a hyperbolic metric on $M_H$ has the property that there are exactly four closed curves on $M_H$ having the same length as $\beta_{1}^{H}$(and $\beta_{2}^{H}$) and these four closed curves are nonsimple. In the previous Lemma, we found two such closed curves, namely $\beta_{1}^{H}$ and $\beta_{2}^{H}$. Lemma~\ref{L:tau} provides the other two closed curves and we will use Lemma~\ref{L:trace check} to show that there are exactly four such closed curves. Since $M_K$ has a simple closed curve, $\beta_{1}^{K}$, of the same length in its lifted metric, $M_H$ and $M_K$ cannot be simple iso-length spectral.

Let $\tau: M_0 \to M_0$ be the hyperelliptic involution. $\tau$ is isotopic to an isometry for any hyperbolic metric on $M_{0}$. So for any curve $\lambda$ on $M_0$, $\length_{M_0}\left(\lambda\right)=\length_{M_0}\left(\tau\left(\lambda\right)\right)$. For a specific basepoint, the induced map $\tau_\ast:\pi_{1}(M_0) \to \pi_{1}(M_0)$ can be computed to be 
\[
\begin{array}{ll}
\tau_\ast\left(a\right)= a^{-1}, &\tau_\ast\left(b\right)= b^{-1},\\
\tau_\ast\left(c\right)= ac^{-1}dc^{-1}d^{-1}ca^{-1}, &\tau_\ast\left(d\right)= b^{-1}ad^{-1}ba^{-1}.
\end{array}
\]
We have the following lemma.

\begin{lem}
\label{L:tau}
The hyperelliptic involution $\tau: M_0 \to M_0$ lifts to $\tau_H: M_H \to M_H$ and $\tau_K: M_K \to M_K$. In particular, $\tau_H\left(\beta_{i}^H\right)\subset M_H$ is nonsimple and $\tau_K\left(\beta_{i}^K\right)\subset M_K$ is simple, for $i=1,2$.
\end{lem}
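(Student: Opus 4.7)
The plan is to separate the two claims. First, I would verify that $\tau$ genuinely lifts to each cover by checking the standard covering-space criterion at the level of fundamental groups. The covers $M_H$ and $M_K$ correspond to the subgroups $\rho^{-1}(H)$ and $\rho^{-1}(K)$ of $\pi_{1}(M_0)$, so a lift exists if $\tau_{\ast}$ preserves each of these subgroups up to conjugation. Rather than checking this case by case, I would prove the stronger statement $\rho \circ \tau_{\ast} = \rho$ as homomorphisms $\pi_{1}(M_0) \to G$, which immediately implies $\tau_{\ast}(\rho^{-1}(S)) = \rho^{-1}(S)$ for every subgroup $S \le G$, and therefore gives both lifts simultaneously.

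To verify $\rho \circ \tau_{\ast} = \rho$, I would compute on each of the four generators $a, b, c, d$ using the semidirect product law $(q,n)(q',n') = (qq', n + q \cdot n')$ in $G$ together with the inverse formula $(q,n)^{-1} = (q^{-1}, -q^{-1} n)$. The cases of $a$ and $b$ are easy: $\rho(a)^{-1} = (3,0)^{-1} = (3,0) = \rho(a)$ since $3^{2} \equiv 1$ mod $8$, and similarly $\rho(b)^{-1} = (5,0) = \rho(b)$. For $\tau_{\ast}(c) = ac^{-1}dc^{-1}d^{-1}ca^{-1}$ and $\tau_{\ast}(d) = b^{-1}ad^{-1}ba^{-1}$, one just multiplies the corresponding seven-letter and five-letter words in $G$ step by step; the expectation is that they collapse to $(1,0) = \rho(c)$ and $(1,1) = \rho(d)$ respectively.

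Once the lifts $\tau_{H}: M_{H} \to M_{H}$ and $\tau_{K}: M_{K} \to M_{K}$ exist, the simplicity assertions are immediate from the elementary fact that a lift of a homeomorphism is a homeomorphism, and homeomorphisms carry simple closed curves to simple closed curves and nonsimple ones to nonsimple ones. Lemma~\ref{L:beta} already provides that $\beta_{1}^{H}, \beta_{2}^{H}$ are nonsimple while $\beta_{1}^{K}, \beta_{2}^{K}$ are simple, so pushing these curves forward by $\tau_{H}$ and $\tau_{K}$ yields the desired conclusion.

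The only real obstacle is bookkeeping in the word-multiplication calculation: because the semidirect product is nonabelian, one must be careful about the order of multiplication and about the inverse formula in the second coordinate. Once that is done, the rest is standard covering-space theory.
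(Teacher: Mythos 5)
Your proposal is correct and follows essentially the same route as the paper: both arguments reduce the lifting claim to checking, via the covering-space lifting criterion, that $\tau_\ast$ preserves $\rho^{-1}(H)$ and $\rho^{-1}(K)$ by computing $\rho\circ\tau_\ast$ on the generators $a,b,c,d$, and the (non)simplicity assertions then follow from Lemma~\ref{L:beta} because the lifts are homeomorphisms. The only difference is in how the computation is packaged: the paper records it as $\rho\circ\tau_\ast=\psi\circ\rho$ for the automorphism $\psi(j,k)=(j,-k)$, which fixes $H$ and $K$ setwise, whereas you assert $\rho\circ\tau_\ast=\rho$ outright; the generator check you defer does go through (for instance $\rho(\tau_\ast(d))=(5,0)(3,0)(1,7)(5,0)(3,0)=(1,1)=\rho(d)$), and either identity gives $\tau_\ast(\rho^{-1}(H))=\rho^{-1}(H)$ and $\tau_\ast(\rho^{-1}(K))=\rho^{-1}(K)$, so your version is, if anything, marginally more economical.
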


\begin{proof}
Let $\psi: G \to G$ be the automorphism of $G$ defined by $\psi(j,k)=(j,-k)$, for any element $(j,k) \in G$. Then we can compute $\psi \circ \rho = \rho \circ \tau_{\ast}$ and $H = \psi^{-1}(H)$. So $\rho^{-1}(H)=\rho^{-1}(\psi^{-1}(H))= \tau_{\ast}^{-1}(\rho^{-1}(H))$. Thus 
\[
\tau_{\ast}\left(\left(\pi_{H}\right)_{\ast}\left(\pi_{1}\left(M_{H}\right)\right)\right)
=\tau_{\ast}\left(\rho^{-1}\left(H\right)\right)
=\rho^{-1}\left(H\right)
=\left(\pi_{H}\right)_{\ast}\left(\pi_{1}\left(M_{H}\right)\right).
\]
Hence the lifting criterion implies that we may lift $\tau$ to $\tau_H$. The existance of a lift $\tau_K$ to $M_K$ is proven in the same way. 
\end{proof}

\begin{lem}
\label{L:trace check}
For almost every $[m] \in \T(M_0)$, if $\gamma$ is a closed curve, $k \in \mathbb Q$ and
\begin{center}
$k \cdot \length_{[m]}\left(\gamma\right)=\length_{[m]}\left(\alpha\right)$
\end{center}
then $k=1$ and $\gamma=\alpha$ or $\tau\left(\alpha\right)$.
\end{lem}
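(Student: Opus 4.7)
The plan has two stages: first reduce to the case of an identity of length functions on all of $\T(M_0)$, then use the rigidity results of Section~\ref{sec:background} to pin down the pair $(\gamma,k)$.

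For the reduction, the set of free homotopy classes of essential closed curves on $M_0$ is countable, as is $\BQ$. For each such pair $(\gamma,k)$, Theorem~\ref{T:length_function} says that $k\cdot \length_{(\cdot)}(\gamma) - \length_{(\cdot)}(\alpha)$ is a real analytic function on the connected manifold $\T(M_0)$, hence either identically zero or nonzero almost everywhere. Taking the countable union of the bad vanishing loci for those pairs that do not give an identity produces a measure-zero exceptional set $E\subset\T(M_0)$; for any $[m]\notin E$, any equality $k\cdot\length_{[m]}(\gamma) = \length_{[m]}(\alpha)$ must in fact hold for every $[m]\in\T(M_0)$.

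Assume then that the identity holds on all of $\T(M_0)$, and write $k=p/q$ in lowest terms with $p,q\geq 1$. Then $\length_{(\cdot)}(\gamma^p) \equiv \length_{(\cdot)}(\alpha^q)$, so Theorem~\ref{T:homology} yields $p[\gamma] = \pm q[\alpha]$ in $H_1(M_0;\BZ)$. Abelianising $\alpha = abd[d,c^{-1}]d^{-1}$ gives $[\alpha] = a+b$, a primitive element of $H_1(M_0;\BZ)\cong\BZ^4$, and $\gcd(p,q)=1$ then forces $p=1$; so $k = 1/q$ and $[\gamma] = \pm q(a+b)$. To rule out $q\geq 2$ I apply Theorem~\ref{T:geometric_intersection}, which gives $i(\gamma,\sigma) = q\,i(\alpha,\sigma)$ for every simple closed curve $\sigma$. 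I would then exhibit (from the picture of $\alpha$ in Figure~\ref{F:Curve alpha}) a simple closed curve $\sigma_0$ on $M_0$ with $i(\alpha,\sigma_0)=1$, so that $i(\gamma,\sigma_0)=q$, and combine this with the primitivity of the geodesic representative of $\gamma$ and the fact that $[\gamma]=\pm q(a+b)$ is $q$ times a primitive class to derive a contradiction.

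Thus $k=1$, so $\length_{(\cdot)}(\gamma)\equiv \length_{(\cdot)}(\alpha)$ on $\T(M_0)$. Theorem~\ref{T:homology} gives $[\gamma]=\pm(a+b)$ and Theorem~\ref{T:geometric_intersection} gives $i(\gamma,\sigma)=i(\alpha,\sigma)$ for every simple closed curve $\sigma$. The class $\tau(\alpha)$ always satisfies these constraints since the hyperelliptic involution is isotopic to an isometry of every hyperbolic metric on $M_0$, which is precisely the Randol-type phenomenon flagged in the introduction. The main obstacle is therefore ruling out any further $\gamma$: I expect to do this by evaluating $i(\gamma,\sigma)$ against a carefully chosen finite family of simple closed curves on $M_0$---the standard generators $a,b,c,d$ together with a few additional curves crossing $\alpha$ in controlled ways, all visible in Figure~\ref{F:Curve alpha}---and showing that any free homotopy class whose intersection profile with all of them matches that of $\alpha$ must be either $\alpha$ or $\tau(\alpha)$.
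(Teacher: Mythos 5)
Your first stage --- reducing to an identity of length functions on all of $\T(M_0)$ by taking a countable union of the vanishing loci coming from Theorem~\ref{T:length_function} --- is exactly what the paper does, and your argument that $k=1$ is an acceptable variant of the paper's (which simply applies Theorem~\ref{T:geometric_intersection} to a simple closed curve $y_1$ with $i(\alpha,y_1)=1$), although your proposed contradiction for $q\geq 2$ is not yet an argument: a primitive closed curve can perfectly well represent an imprimitive homology class (e.g.\ $a^2b^2$ represents $2(a+b)$), so ``primitivity of the geodesic representative'' does not clash with $[\gamma]=\pm q(a+b)$. The genuine gap, however, is in your final stage, which is where the entire content of the lemma lives. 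You propose to show that any class $\gamma$ whose geometric intersection numbers with a finite family of simple closed curves agree with those of $\alpha$ must be $\alpha$ or $\tau(\alpha)$. That is a strictly stronger statement than the lemma and there is no reason to believe it: $i(\cdot,\sigma)$ over simple $\sigma$ is a far coarser invariant than the length function (already on a one-holed torus, $ab$ and $ab^{-1}$ meet $a$ once and $b$ once but have different trace polynomials), so the finite set of classes matching the intersection profile of $\alpha$ will in general contain curves whose length functions differ from $\length_{(\cdot)}(\alpha)$. Such curves do not contradict the lemma, but your method has no way to eliminate them, and the sentence ``showing that any free homotopy class whose intersection profile matches must be $\alpha$ or $\tau(\alpha)$'' is precisely the step you have not supplied.

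The paper closes this gap by a different, and essential, device. After using $i(\gamma,y_2)=0$ to confine $\gamma$ to the two-holed torus $M_0-y_2$, it homotopes $\gamma$ onto a spine and uses degenerating metrics (some edge lengths bounded, others tending to infinity) together with Theorem~\ref{T:homology} to pin down the edge counts, producing an explicit finite list $A$ of $4320$ words containing every candidate. It then picks one explicit hyperbolic structure, given by explicit matrices for $\rho_m(a)$, $\rho_m(b)$, $\rho_m(x)$, and checks by computer that the only words in $A$ whose trace squared equals $\tr^2(\rho_m(\alpha))$ are $\alpha$ and $\tau(\alpha)^{-1}$; since a curve with $\length_{[m]}(\gamma)=\length_{[m]}(\alpha)$ for every $[m]$ must in particular match at this one point, the lemma follows. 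The idea your proposal is missing is this test of the length condition itself at a single explicit metric (equivalently, a trace computation in an explicit representation); intersection numbers against finitely many simple closed curves cannot substitute for it.
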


\begin{proof}
For any $\gamma$ and any $k$, either $k\cdot\length_{[m]}(\gamma)=\length_{[m]}(\alpha)$ is true for every $[m]$ or $k\cdot\length_{[m]}(\gamma)\neq \length_{[m]}(\alpha)$ for almost every $[m]$, by Theorem~\ref{T:length_function}. So it suffices to show that if $k\cdot\length_{[m]}(\gamma)=\length_{[m]}(\alpha)$, for every $[m]$, then $k=1$ and $\gamma=\alpha$ or $\tau\left(\alpha\right)$.
%%%%%%%%%%%%%%%%%%%%%%%%%%%%%%%%%%%%%%%%%%%%%%%%%%%%%%%%%%%%%%%%%%%%%%%%%%%%%%%%%%%%%%%%%%%%%%%%
\begin{figure}[ht]
\begin{center}
\includegraphics[height=2.5cm]{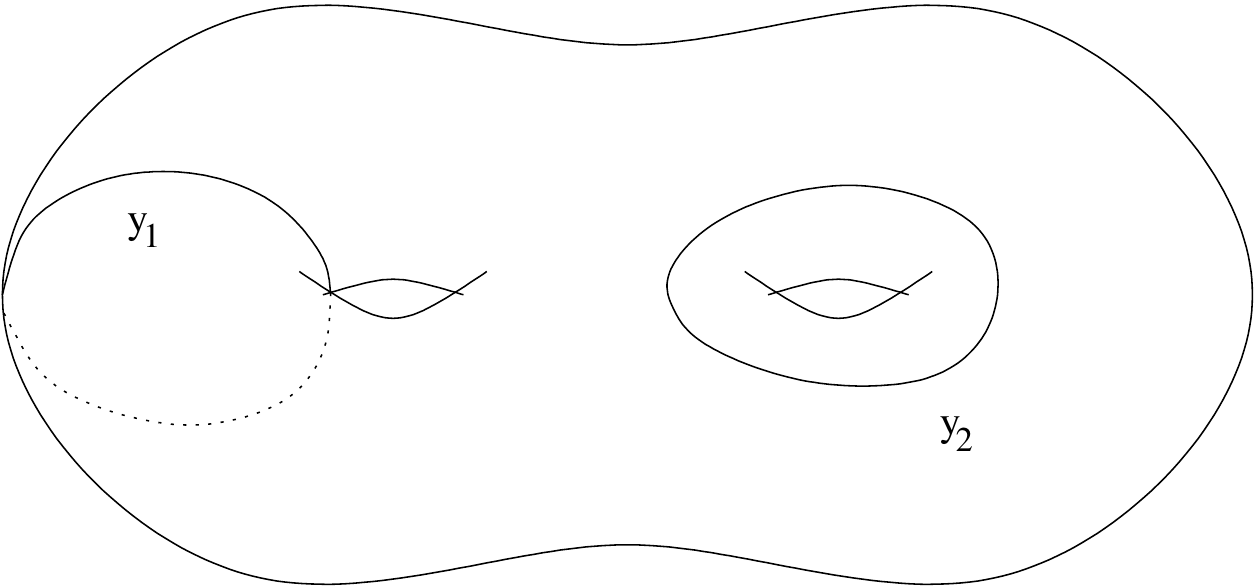} 
\caption{The simple closed curves $x_1$ and $x_2$ on the surface $M_0$.} 
\label{F:x}
\end{center}
\end{figure}
%%%%%%%%%%%%%%%%%%%%%%%%%%%%%%%%%%%%%%%%%%%%%%%%%%%%%%%%%%%%%%%%%%%%%%%%%%%%%%%%%%%%%%%%%%%%%%%%

Let $y_1$ be a simple closed curve as shown in Figure~\ref{F:x}. The geometric intersection number of $\alpha$ and $y_1$ is $i(\alpha,y_1)=1$. Since $k\cdot\length_{[m]}(\gamma)=\length_{[m]}(\alpha)$, by Theorem~\ref{T:geometric_intersection}, $k \cdot i(\gamma,y_1)=i(\alpha,y_1)=1$. Since the geometric intersection numbers are nonnegative intergers, $k=1$.
To prove that $\gamma=\alpha$ or $\tau(\alpha)$, we find some neccessary conditions for $\gamma$ to have the same length as $\alpha$, for every $[m]\in\T(M_0)$

Let $y_2$ be the simple closed curve shown in Figure~\ref{F:x}. Since $i(\gamma,y_2)=i(\alpha,y_2)=0$ by Theorem~\ref{T:geometric_intersection}, $\gamma$ and $\alpha$ are contained in $M_0-y_2$.

We cut $M_0$ along the simple closed curve $y_2$ to get a torus with two holes and change the basis $\left\{a,b,d\right\}$ to the basis $\left\{a,b,x=da^{-1}\right\}$, see Figure~\ref{F:2holes}. Then $\alpha=abxaba^{-1}b^{-1}x^{-1}$ and $\tau_\ast(\alpha)=a^{-1}b^{-1}b^{-1}x^{-1}ba^{-1}b^{-1}axb$.
%%%%%%%%%%%%%%%%%%%%%%%%%%%%%%%%%%%%%%%%%%%%%%%%%%%%%%%%%%%%%%%%%%%%%%%%%%%%%%%%%%%%%%%%%%%%%%%%%
\begin{figure}[ht]
\begin{center}
\includegraphics[height=3cm]{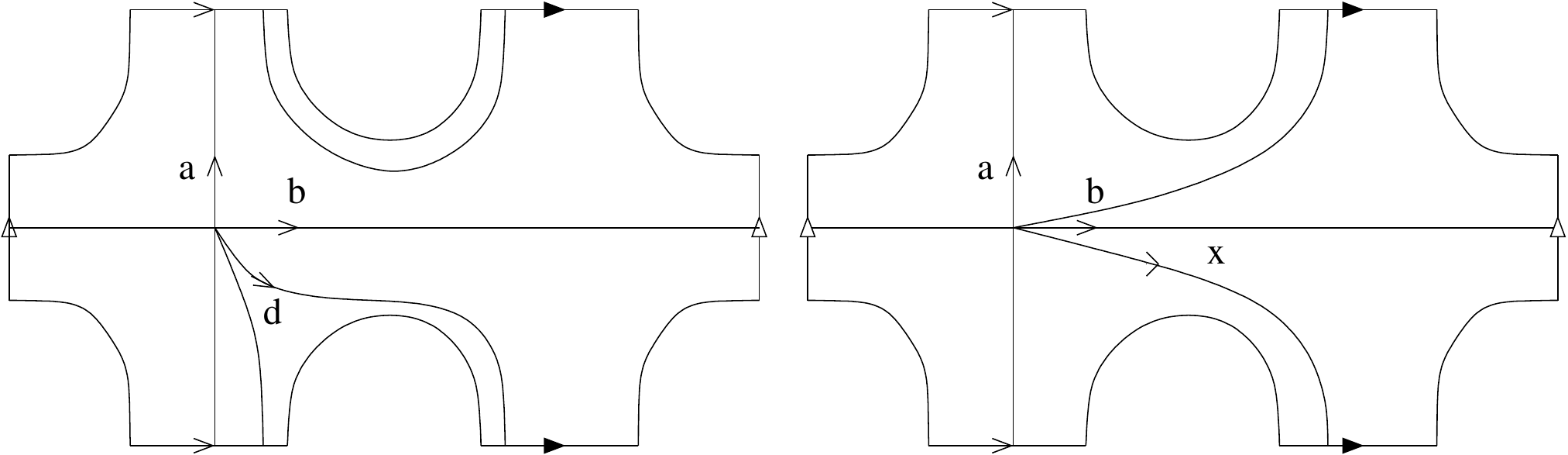} 
\caption{The torus with two holes, $M_0-x_2$.} 
\label{F:2holes}
\end{center}
\end{figure}
%%%%%%%%%%%%%%%%%%%%%%%%%%%%%%%%%%%%%%%%%%%%%%%%%%%%%%%%%%%%%%%%%%%%%%%%%%%%%%%%%%%%%%%%%%%%%%%%
Consider the spine as shown in Figure~\ref{F:spine}, we homotope $\alpha$ and $\gamma$ into spine, as edge loops without backtracking. Then by considering metrics on $M_0$ where length of some of the edges are bounded and others tend to infinity, we see that in order for $\gamma$ to have the same length as $\alpha$ in $M_0$,
\begin{center}
$\sharp \left\{a_1 \:\mbox{edges of}\:\gamma\right\} = \sharp \left\{a_1 \:\mbox{edges of}\: \alpha \right\} = 3$,\\
$\sharp \left\{x_1 \:\mbox{edges of}\:\gamma\right\} = \sharp \left\{x_1 \:\mbox{edges of}\: \alpha \right\} = 3$,\\
$\sharp \left\{b_1 \:\mbox{edges of}\: \gamma\right\} + \sharp \left\{b_2 \:\mbox{edges of}\: \gamma\right\} = \sharp \left\{b_1 \:\mbox{edges of}\: \alpha\right\} + \sharp \left\{b_2 \:\mbox{edges of}\:\alpha\right\} =8$.
\end{center}
%%%%%%%%%%%%%%%%%%%%%%%%%%%%%%%%%%%%%%%%%%%%%%%%%%%%%%%%%%%%%%%%%%%%%%%%%%%%%%%%%%%%%%%%%%%%%%%
\begin{figure}[ht]
\begin{center}
\includegraphics[height=3cm]{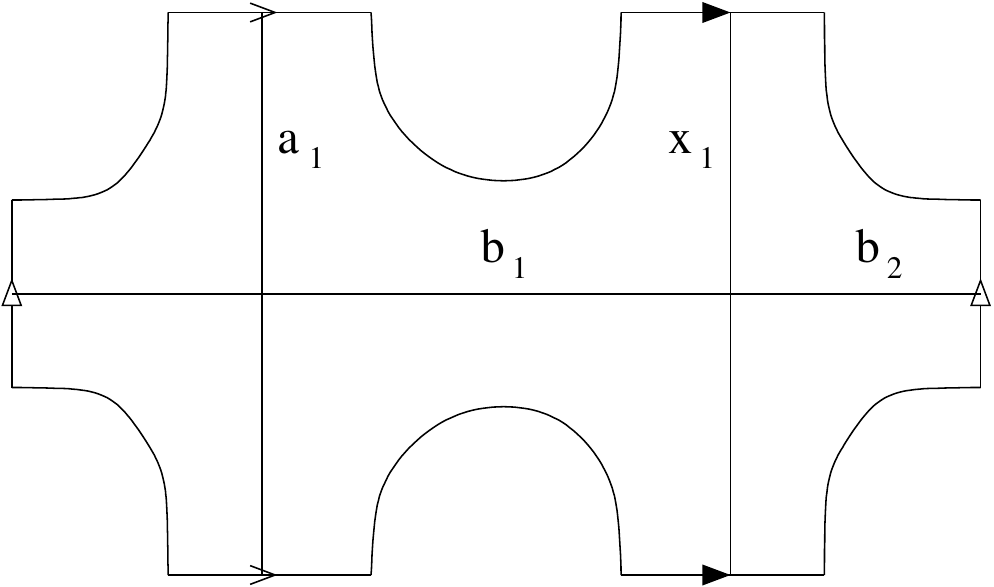} 
\caption{The torus with two holes, $M_0-x_2$ with spine.} 
\label{F:spine}
\end{center}
\end{figure}
%%%%%%%%%%%%%%%%%%%%%%%%%%%%%%%%%%%%%%%%%%%%%%%%%%%%%%%%%%%%%%%%%%%%%%%%%%%%%%%%%%%%%%%%%%%%%%%
Since $\length_{[m]}(\gamma)=\length_{[m]}(\alpha)$ and $\left[\alpha\right]=\left[ab\right]\in H_1(M_0)$, $\left[\gamma\right]=\pm\left[ab\right]\in H_1(M_0)$, by Theorem~\ref{T:homology}. Thus from the observation of the edge counts above (replacing $\gamma$ with $\gamma^{-1}$ if necessary), we have the following conditions;
\begin{enumerate}
	\item $\gamma$ consists of exactly two $a$'s, one $a^{-1}$, one $x$, and one $x^{-1}$,
	\item $\sharp \left\{b^{-1}\mbox{'s in}\:\gamma\right\} = \sharp \left\{b \mbox{'s in}\:\gamma\right\}  -1$, and
	\item $\sharp \left\{b_1 \:\mbox{edges of}\:\gamma\right\} + \sharp \left\{b_2 \:\mbox{edges of}\: \gamma\right\} = 8$.
\end{enumerate}

Next we find all closed curves on $M_0$ satisfying these three conditions. By the conditions above we know the exact number of $a$'s, $a^{-1}$'s, $x$'s, and $x^{-1}$ that appear in $\gamma$. So we only need to determine the possible number of $b$'s and $b^{-1}$. To do this, we note that while the number $a_1$-edge and the number of $x_1$-edge can be computed directly by counting the number of $\left\{a, a^{-1}\right\}$ and $\left\{x, x^{-1}\right\}$, respectively, some combinations of $x$'s and $b$'s provide cancellations in the sum of $b_1$ and $b_2$-edge count. One example is that $x$ alone contributes $2$ to the sum of $b_1$ and $b_2$-edge count, $b$ alone also contributes $2$ to the sum of $b_1$ and $b_2$-edge count but $xb$ contributes only $2$ to the sum of $b_1$ and $b_2$-edge count.

%All combinations of $x$'s and $b$'s that have at least one cancellation are $xb$, $x^{-1}b$, $bx$, $b^{-1}x$, $b^{-1}xb$, $b^{-1}x^{-1}$ and $b^{-1}x^{-1}b$.

%To find all closed curves on $M_0$ satisfying the three conditions, we use condition 1, ``$\gamma$ consists of exactly one $x$, and one $x^{-1}$'' to make the table below and use condition 2 and 3 to fill $b$ or $b^{-1}$ in the cells. Note that we leave a cell blank if it is impossible to fill in $b$ or $b^{-1}$.

%\begin{center}
%\begin{tabular}{c|l|l|l|l|}
%   \multicolumn{1}{c}{}
% & \multicolumn{1}{c}{$x^{-1}$}
% & \multicolumn{1}{c}{$x^{-1}b$}
% & \multicolumn{1}{c}{$b^{-1}x^{-1}$}
% & \multicolumn{1}{c}{$b^{-1}xb$} \\
%\cline{2-5}
% $x$         &                 &   $b$, $b^{-1}$   &     $b$, $b$    &                        \\
%\cline{2-5}
% $xb$        &  $b$, $b^{-1}$   &                   &                 &    $b$, $b^{-1}$       \\
%\cline{2-5}
% $b^{-1}x$   & $b$, $b$        &                   &                 &    $b$, $b$            \\
%\cline{2-5}
% $b^{-1}xb$  &                 &   $b$, $b^{-1}$   &      $b$, $b$   &                        \\
%\cline{2-5}
%\end{tabular}
%\end{center}

%Let $A$  be the set of all permutation words we get from the table. Then $A$ contains all closed curves on $M_0$ satisfying the three conditions.

Taking this type of cancellation into consideration, we can produce a list $A$ of $4320$ words in $\left\{a^{\pm1},b^{\pm1},x^{\pm1}\right\}$ that contains all curves satisfying the three conditions.

One can explicitly construct $[m] \in \T(M_0)$, a hyperbolic metric on $M_0$ such that
 
\[ \rho_m(a)=  \left( \begin{array}{cc}
5/3 & 3/4  \\
3/4 & 5/4  
\end{array} \right),\]
\[ \rho_m(b)=  \left( \begin{array}{cc}
4 & 0  \\
0 & 1/4  
\end{array} \right),\]
\[ \rho_m(x)=  \left( \begin{array}{cc}
5/3 & -16/3  \\
-1/3 & 5/3  
\end{array} \right).\]

Then the trace of $\rho_m(\alpha)$ is 
\[ 
\tr(\rho_m(\alpha))= 109505/2048.
\]
By using Mathematica, we have that the elements in $A$ having the same trace squared as $\alpha$ are $\alpha$ and $\tau(\alpha)^{-1}$.

So, by equation~(\ref{eq:length_function}), the only curves in $A$ that have the same length in $M_0$ as $\alpha$ are $\alpha$ and $\tau(\alpha)$.

Thus if $\length_{[m]}\left(\gamma\right)=\length_{[m]}\left(\alpha\right)$, for every $[m] \in \T(M_0)$, then $\gamma=\alpha$ or $\tau\left(\alpha\right)$.
\end{proof}

\begin{proof}[Proof of Theorem ~\ref{T:main theorem}]
Let $\rho:\pi_{1}(M_0) \to G$ be the surjective homomorphism defined in this section.

Let $\alpha = abd[d,c^{-1}]d^{-1}$ be a closed geodesic on $M_0$.

By Lemma~\ref{L:beta} and Lemma~\ref{L:tau}, for almost every $[m] \in \T(M_0)$, there are four nonsimple closed geodesics
$ 
\left\{
\beta_{1}^{H}, 
\beta_{2}^{H}, 
\tau_H\left(\beta_{1}^H\right), 
\tau_H\left(\beta_{2}^H\right) 
\right\}
$
on $M_H$ having length $l=\length_{[m]}(\beta_{1}^{H})=\length_{[m]}(\alpha)$ and there are four simple closed geodesics 
$
\left\{
\beta_{1}^{K}, 
\beta_{2}^{K}, 
\tau_K\left(\beta_{1}^K\right), 
\tau_K\left(\beta_{2}^K\right) 
\right\}
$
on $M_K$ having length $l$.

If $\gamma^H$ is a closed geodesic on $M_H$ having length 
\[
l=\length_{[m]}(\beta_{1}^{H})=\length_{[m]}(\alpha),
\]
then $\pi_H(\gamma^H)$ is a closed geodesic on $M_0$ having length 
\[
k\cdot l=k\cdot\length_{[m]}(\beta_{1}^{H})=k\cdot\length_{[m]}(\gamma),
\]
for some $k = 1, 1/2, 1/4, \mbox{or}\;1/8$, since the degree of $\pi_H$ and $\pi_K$ is $8$.

By Lemma~\ref{L:trace check}, $k=1$ and $\pi_H(\gamma^H)=\alpha$ or $\tau(\alpha)$. Thus $\gamma^H$ is one of the four nonsimple closed curves above. Hence there are exactly four closed curves on $M_H$ having length $l$ and those four closed curves are nonsimple. Similarly, there are exactly four closed curves on $M_K$ having length $l$ and those four closed curves are simple.

Therefore $M_H$ and $M_K$ are not simple iso-length spectral.
\end{proof}

\begin{proof}[Proof of Corollary~\ref{C:cor}]  
As the proof of Theorem~\ref{T:main theorem} shows, for almost every $[m] \in \T(M_0)$, there is a simple closed geodesic on $M_K$ with the same length as $\alpha$ on $M_0$, but no such simple geodesic on $M_H$. Therefore, $M_H$ and $M_K$ are not simple length equivalent.
\end{proof}%%%%%%%%%%%%%%%%%%%%%%%%%%%%%%%%%%%%%%%%%%%%%%%%%%%%%%%%%%%%%%%%%%%%%%%%%%%%%%%%%%%%%%%%%%%%%%%
%%%%%%%%%%%%%%%%%%%%%%%%%%%%%%%%%%%%%%%%%%%%%%%%%%%%%%%%%%%%%%%%%%%%%%%%%%%%%%%%%%%%%%%%%%%%%%%
%%%%%%%%%%%%%%%%%%%%%%%%%%%%%%%%%%%%%%%%%%%%%%%%%%%%%%%%%%%%%%%%%%%%%%%%%%%%%%%%%%%%%%%%%%%%%%%
%%%%%%%%%%%%%%%%%%%%%%%%%%%%%%%%%%%%%%%%%%%%%%%%%%%%%%%%%%%%%%%%%%%%%%%%%%%%%%%%%%%%%%%%%%%%%%%
%%%%%%%%%%%%%%%%%%%%%%%%%%%%%%%%%%%%%%%%%%%%%%%%%%%%%%%%%%%%%%%%%%%%%%%%%%%%%%%%%%%%%%%%%%%%%%%
\section{Final discussion}
\label{sec:Final discussion}
%%%%%%%%%%%%%%%%%%%%%%%%%%%%%%%%%%%%%%%%%%%%%%%%%%%%%%%%%%%%%%%%%%%%%%%%%%%%%%%%%%%%%%%%%%%%%%%
%%%%%%%%%%%%%%%%%%%%%%%%%%%%%%%%%%%%%%%%%%%%%%%%%%%%%%%%%%%%%%%%%%%%%%%%%%%%%%%%%%%%%%%%%%%%%%%
%%%%%%%%%%%%%%%%%%%%%%%%%%%%%%%%%%%%%%%%%%%%%%%%%%%%%%%%%%%%%%%%%%%%%%%%%%%%%%%%%%%%%%%%%%%%%%%
%%%%%%%%%%%%%%%%%%%%%%%%%%%%%%%%%%%%%%%%%%%%%%%%%%%%%%%%%%%%%%%%%%%%%%%%%%%%%%%%%%%%%%%%%%%%%%%
%%%%%%%%%%%%%%%%%%%%%%%%%%%%%%%%%%%%%%%%%%%%%%%%%%%%%%%%%%%%%%%%%%%%%%%%%%%%%%%%%%%%%%%%%%%%%%%
Theorem~\ref{T:main theorem} should hold for any surjective homomorphism $\rho: \pi_1(M_0) \to G$ and for any closed surface $M_0$.  Indeed, it can be shown that for $G$ as in Theorem~\ref{T:main theorem} and any $\rho$, there is a genus $2$ or $3$ subsurface $\Sigma \subset M_0$ so that the restriction $\rho|_{\pi_1(\Sigma)}$ is surjective.  Then, one can list all such surjective homomorphisms and try to construct a curve $\alpha$ in $\Sigma$ playing the role of $\alpha$ in the proof of Theorem~\ref{T:main theorem}.  This does not seem to provide much new information, and even for the cases analyzed by the author, the resulting presentation is significantly more complicated.
It would be interesting to find an approach that works for all homomorphisms simultaneously.

Another class of examples that would be interesting to analyze with respect to Question~\ref{Q:question2} are those given in \cite{Brooks} and \cite{Buser}, as the proof that the surfaces are iso-length spectral is more directly geometric.
%%%%%%%%%%%%%%%%%%%%%%%%%%%%%%%%%%%%%%%%%%%%%%%%%%%%%%%%%%%%%%%%%%%%%%%%%%%%%%%%%%%%%%%%%%%%%%%
%%%%%%%%%%%%%%%%%%%%%%%%%%%%%%%%%%%%%%%%%%%%%%%%%%%%%%%%%%%%%%%%%%%%%%%%%%%%%%%%%%%%%%%%%%%%%%%
%%%%%%%%%%%%%%%%%%%%%%%%%%%%%%%%%%%%%%%%%%%%%%%%%%%%%%%%%%%%%%%%%%%%%%%%%%%%%%%%%%%%%%%%%%%%%%%
%%%%%%%%%%%%%%%%%%%%%%%%%%%%%%%%%%%%%%%%%%%%%%%%%%%%%%%%%%%%%%%%%%%%%%%%%%%%%%%%%%%%%%%%%%%%%%%
%%%%%%%%%%%%%%%%%%%%%%%%%%%%%%%%%%%%%%%%%%%%%%%%%%%%%%%%%%%%%%%%%%%%%%%%%%%%%%%%%%%%%%%%%%%%%%%
\section*{Acknowledgements}
%%%%%%%%%%%%%%%%%%%%%%%%%%%%%%%%%%%%%%%%%%%%%%%%%%%%%%%%%%%%%%%%%%%%%%%%%%%%%%%%%%%%%%%%%%%%%%%
%%%%%%%%%%%%%%%%%%%%%%%%%%%%%%%%%%%%%%%%%%%%%%%%%%%%%%%%%%%%%%%%%%%%%%%%%%%%%%%%%%%%%%%%%%%%%%%
%%%%%%%%%%%%%%%%%%%%%%%%%%%%%%%%%%%%%%%%%%%%%%%%%%%%%%%%%%%%%%%%%%%%%%%%%%%%%%%%%%%%%%%%%%%%%%%
%%%%%%%%%%%%%%%%%%%%%%%%%%%%%%%%%%%%%%%%%%%%%%%%%%%%%%%%%%%%%%%%%%%%%%%%%%%%%%%%%%%%%%%%%%%%%%%
%%%%%%%%%%%%%%%%%%%%%%%%%%%%%%%%%%%%%%%%%%%%%%%%%%%%%%%%%%%%%%%%%%%%%%%%%%%%%%%%%%%%%%%%%%%%%%%
I would like to thank my advisor Christopher J. Leininger for guidance and useful conversations.

\bibliographystyle{plain}
\bibliography{paper}

Department of Mathematics, University of Illinois at Urbana-Champaign, 1409 W. Green st., Urbana, IL 61801. \texttt{e-mail:rmaungc2@illinois.edu}

\end{document}